\newtheorem{theorem}{Theorem}[section]
\newtheorem{definition}[theorem]{Definition}
\newtheorem{lemma}[theorem]{Lemma}
\newtheorem{remark}[theorem]{Remark}
\newenvironment{proof}[1][Proof]{\textbf{#1.} }{\hfill\rule{0.5em}{0.5em}}
{\catcode`\@=11\global\let\AddToReset=\@addtoreset
\AddToReset{equation}{section}

\AddToReset{theorem}{section}

\title{Existence of weak solutions to borderline double-phase problems with logarithmic convection terms}
\author{Minh-Phuong Tran\footnote{Corresponding author.} \thanks{Applied Analysis Research Group, Faculty of Mathematics and Statistics, Ton Duc Thang University, Ho Chi Minh City, Vietnam; \texttt{tranminhphuong@tdtu.edu.vn}}, Thanh-Nhan Nguyen\thanks{Group of Analysis and Applied Mathematics, Department of Mathematics, Ho Chi Minh City University of Education, Ho Chi Minh City, Vietnam; \texttt{nhannt@hcmue.edu.vn}}}

\date{\today}

\begin{document}
 
\maketitle
\begin{abstract}

In this study, we devote our attention to the question of clarifying the existence of a weak solution to a class of quasilinear double-phase elliptic equations with logarithmic convection terms under some appropriate assumptions on data. The proof is based on the surjectivity theorem for the pseudo-monotone operators and modular function spaces and embedding theorems in generalized Orlicz spaces. Our approach in this paper can be extended naturally to a larger class of unbalanced double-phase problems with logarithmic perturbation and gradient dependence on the right-hand sides. \\

\noindent Keywords: Borderline double-phase problems; Weak solutions; Existence result; Logarithmic convection term; Orlicz-Sobolev spaces. \\


\end{abstract}   
                  

\section{Introduction}\label{sec:intro}

In this paper, we are concerned with the existence result for the following double-phase problem of the type
\begin{align}\label{main-eq}
\begin{cases} - \mathrm{div}\left(\mathcal{A}(\nabla u)\right) &= \ F(x,u,\nabla u), \quad  \mbox{ in } \Omega, \\ \hspace{1.3cm} u & = \ 0, \hspace{2cm} \mbox{ on } \partial \Omega, \end{cases}
\end{align}
where $\Omega \subset \mathbb{R}^n$ is an open bounded domain ($n \ge 2$) with smooth boundary $\partial \Omega$, and in this case, the vector field $\mathcal{A}: \mathbb{R}^n \to \mathbb{R}^n$ is given by
\begin{align}\label{def:A}
\mathcal{A}(\xi) = \partial_{\xi} \big(|\xi|^{p} + |\xi|^{p}\log(e+|\xi|)\big), \quad \xi \in \mathbb{R}^n.
\end{align}
Here, we restrict our study to the most interesting case of $1<p<n$ and the right-hand side of~\eqref{main-eq} is a gradient-dependent perturbation (convection term). We assume that $F: \Omega \times \mathbb{R} \times \mathbb{R}^n \to \mathbb{R}$ is a Carath\'eodory function (that is, it is measurable for a.e. $x \in \Omega$ with respect to $(t,\xi) \in \mathbb{R}\times\mathbb{R}^n$ and continuous for every $(t,\xi) \in\mathbb{R}\times\mathbb{R}^n$ with respect to $x \in \Omega$). 

The equation appearing in~\eqref{main-eq} is relevant to the Euler-Lagrange equation of the energy functional
\begin{align}\label{eq:g}
\omega \mapsto \int_\Omega{g(\nabla \omega)dx},
\end{align}
where the integrand $g$ is a combination of two different phases of elliptic behaviors: standard polynomial growth and the logarithmic perturbation of functional with $(p,q)$-growth with respect to the gradient. The study of functional displayed in~\eqref{eq:g} is included in the realm of integral functional with non-standard growth condition, according to Marcellini's pioneering contributions in~\cite{Marcellini1989, Marcellini1991}, where the energy density satisfies
\begin{align}\label{eq:pq}
|\xi|^p \lesssim g(\xi) \lesssim |\xi|^q+1, \qquad 1 < p \le q.
\end{align}

The theory of double-phase functionals traces a long way back to the works of Zhikov in~\cite{Zhikov1986, Zhikov2011}, who first described a feature of strongly anisotropic materials with two hardening exponents $p$ and $q$ in the context of homogenization theory and nonlinear elasticity. In such models, integrand $g$ changes its ellipticity rate according to the point, i.e. $g \equiv g(x,\xi)$. In particular, $g(x,\xi) = |\xi|^p + a(x)|\xi|^q$, where the coefficient $a(\cdot)$ dictates the geometry of a composite of two different materials. For this model, there have been extensive mathematical investigations pertaining to the existence of minimizers. In a related context, various existence results have been studied by multiple authors in different directions. To mention a few, an existence result proved by Perera and Squassina~\cite{PS18} using Morse theory (critical groups); existence and multiplicity results established by Liu and Dai~\cite{LD18} with variational methods; existence proofs by Gasi\'nski and Winkert~\cite{GW19} with the usage of surjectivity for pseudomonotone operators; the isotropic and anisotropic double-phase problems studied by  R\v{a}dulescu in~\cite{R2019}; Zhang and R\v{a}dulescu~\cite{ZR2018} with the tools of critical points theory in generalized Orlicz–Sobolev space; and an increasing list of references is far from being complete. Besides, many notable works in the regularity of minimizers to such models have been developed in recent years. To explore more, we refer to the studies of Colombo and Mingione in~\cite{CM2015,CM2015_2}, Baroni, Colombo, Mingione~\cite{BCM16,BCM18}, Byun and Oh in~\cite{BO2017}, Beck-Mingione~\cite{BM2020}, De Filippis-Mingione~\cite{FM2019, FM2020,FM2020_2} and the excellent survey paper~\cite{MR2021} for more references. 

To our knowledge, the energy functional of type~\eqref{eq:g} mixed with both polynomial and logarithmic perturbations was of interest for the first time in~\cite{MS1999, BCM15, BCM16} towards the regularity of minimizers. In these papers, the authors mentioned the borderline case of double-polynomial-phase functionals
\begin{align}\label{eq:G}
\omega \mapsto \mathcal{G}(\omega,\Omega):=\int_\Omega{\big(a(x)|\nabla\omega|^p + |\nabla\omega|^p\mathrm{log}(e+|\nabla\omega|) \big)dx},
\end{align}
where the non-negative function $a(\cdot)$ is supposed to be bounded. So far as we know, there is not much previous work regarding the existence and/or uniqueness of solutions to corresponding equations related to such energy functionals, especially when the right-hand side is a convection term (with gradient dependence). Our goal here is to address the question of the existence of at least a weak solution to the corresponding Euler-Lagrange equation of functional $\mathcal{G}$ shown in~\eqref{eq:G}. Inspired by recent works of existence theory for quasilinear double-phase problems with $(p,q)$-Laplacian and convection term mentioned above, we believe that it would be very interesting to treat the borderline-case of double-phase problem~\eqref{main-eq}. Furthermore, for simplicity purposes,  we focus on the incorporated functional $\mathcal{G}$ in~\eqref{eq:G}, where $a(\cdot) \equiv 1$, or the double-phase integrand $g$ depends only on the gradient of the solution. It should be mentioned that the case when the dependence on $x$ produces the gap, that is $\mathcal{A}(x,\xi)=\partial_{\xi} \big( a(x)|\xi|^{p} +|\xi|^{p}\mathrm{log}(e+|\xi|)\big)$, is also important and this will be investigated in a forthcoming paper. 

The main existence result in this paper is based on the subjectivity theorem for pseudo-monotone operators in~\cite{CLM07}. To the best of our knowledge, the theory of pseudo-monotone operators is known as a useful tool in proving the existence theorems for quasilinear elliptic and parabolic partial differential equations. The key idea underlying this existence theory, which goes back to Browder~\cite{Browder} and Minty~\cite{Minty}, is to study the properties of monotone operators. Later, the existence theorems were extended to a more general class of quasilinear equations by Hartmann and Stampacchia in~\cite{HS1966}. Since there were several equations involving operators not monotone, Br\'ezis in~\cite{Brezis} introduced a vast class of so-called pseudo-monotone operators, thereby extending Browder's existence theorem. This class of operators became an important research topic that has been developed in the solvability of a wide class of quasilinear equations. Therefore, the proof of such existence exploits an argument similar to that in~\cite{CLM07} and somewhat enhances previous results in the same direction~\cite{LD18, GW19}. The main novelty of this work lies in the model that, our problem is driven by a nonhomogeneous differential operator mixed between standard polynomial and logarithmic growths. Furthermore, it is worth mentioning that the reaction term is allowed to depend on both the solution and its gradient (called the convection term). Further, it is worth mentioning that when comparing with what has been done with quasilinear elliptic equations/systems driven by double-phase operators involving two distinct power-hardening exponents $p$ and $q$ according to the coefficient $a(\cdot)$, we observe that the previous existence results cannot imply the existence result in this paper. These features make our main result somewhat new and interesting. 

Now we spend a few words about our technique. Although the proof is based on the surjectivity theorem for the pseudo-monotone operators (see~\cite{CLM07}) and the key idea follows a similar path to the proof for double-phase problems with $(p,q)$-growth in~\cite{GW19}, due to the logarithmic growth, the connection between embedding theorems and the use of modular functions make a difference in our approach here. So, we believe that such a technique is perhaps interesting in itself and could find further possible applications in many different special models with the appearance of logarithmic growth.

This paper is organized as follows. Section~\ref{sec:pre} is devoted to notation, definitions, and necessary background from the theory of double-phase problems and function spaces. In this section, we also sketch the main ideas, and ingredients employed in the proofs of our result and point out some technical difficulties. Section~\ref{sec:main} is the most substantial, it contains the statement of the main theorem, and some remarks are concerned with both the model and technique. Finally, in Section~\ref{sec:proof}, we end up proving the main result.

\section{Preliminary materials}\label{sec:pre}
Let us begin with some notation and definitions used throughout this paper. Firstly, notice that we will follow the usual convention of denoting by $C$ a general positive constant which will not necessarily be the same at different occurrences and which can also change from line to line. Peculiar dependencies on parameters will be emphasized in parentheses when needed, i.e. $C=C(n,p)$ means that $C$ depends on $n,p$. In the sequel, let $\Omega$ be an open bounded domain in $\mathbb{R}^n$ with smooth boundary $\partial\Omega$, $n \ge 2$. We denote by $|E|$ the finite $n$-dimensional Lebesgue measure of a certain measurable set $E \subset \mathbb{R}^n$ and with $h: E \to \mathbb{R}$ being a measurable map, we shall denote by
\begin{align*}
\fint_{E} h(x) dx = \frac{1}{|E|} \int_{E} h(x) dx
\end{align*}
its mean integral on $E$. For any $p>1$, the notation $p'$ stands for H\"older conjugate exponent of $p$, i.e. $p'=\frac{p}{p-1}$. Further, we shall make use of the star notation, $p^*$ for the Sobolev critical exponent to $p$
\begin{align*}
p^* = \begin{cases}
\frac{np}{n-p}, \quad \text{if} \ \ p<n\\
+\infty, \quad \text{if} \ \ p \ge n.
\end{cases}
\end{align*}

For the convenience of the reader, we recall some basic definitions and properties related to the Young function and generalized Orlicz-Sobolev spaces as follows. For a quite comprehensive account, the interested reader might start by  referring to~\cite{HH19} and the bibliography therein.

\begin{definition}[Young function]
We say that $G: [0,\infty) \to [0,\infty)$ to be a Young function if $G$ is non-decreasing convex and satisfies the following conditions
\begin{align*}
G(0) = 0, \ \lim_{t \to \infty} G(t) = \infty, \ \lim_{t \to 0^+} \frac{G(t)}{t} = 0, \ \lim_{t \to \infty} \frac{G(t)}{t} = \infty.
\end{align*}
Then, the complementary Young function $G^*$ to $G$ is defined by
\begin{align*}
G^*(t) = \sup \{ts - G(s): \ s \ge 0\}, \quad t \ge 0.
\end{align*}
\end{definition}

\begin{definition}[$\Delta_2$ and $\nabla_2$-conditions]
We say that the Young function $G$ satisfies $\Delta_2$ condition if there exists $\tau_1>1$ such that
\begin{align*}
G(2t) \le \tau_1 G(t), \ \mbox{ for all } t \ge 0.
\end{align*}
In this case, we write $G \in \Delta_2$. We say that the Young function $G$ satisfies $\nabla_2$ condition, denoted by $G \in \nabla_2$, if there exists $\tau_2>1$ such that
\begin{align*}
G(t) \le \frac{1}{2\tau_2} G(\tau_2 t), \ \mbox{ for all } t \ge 0.
\end{align*}
It is worth noting that if $G$ satisfies both $\Delta_2$ and $\nabla_2$-conditions, we shall write $G \in \Delta_2 \cap \nabla_2$.
\end{definition}

In what follows, we state a useful lemma for later use in the proof, interested readers might consult~\cite{HH19} for further details. 

\begin{lemma} \label{lem-Young} Let $G$ be a Young function.
\begin{itemize}
\item[i)] $G^*$ is also a Young function and $(G^*)^* = G$.
\item[ii)] $G \in \nabla_2 \Leftrightarrow G^* \in \Delta_2$.
\item[iii)] If $G \in \Delta_2$ then there exist some constants $1<\nu_1 \le \nu_2$ and $C>1$ independent of $\sigma$ and $t$ such that
\begin{align*}
C^{-1} \min\{\sigma^{\nu_1},\sigma^{\nu_2}\} G(t) \le G(\sigma t) \le C \max\{\sigma^{\nu_1},\sigma^{\nu_2}\} G(t), \quad \forall t, \sigma \ge 0.
\end{align*}
\item[iv)] If $G \in \Delta_2 \cap \nabla_2$ then 
\begin{align}\label{G-star}
G^*\left(\frac{G(t)}{t}\right) \le G(t) \le G^*\left(\frac{2G(t)}{t}\right) , \quad \mbox{ for all } t >0.
\end{align}
Moreover, for every $\varepsilon \in (0,1)$, there exists $C>0$ such that
\begin{align*}
st \le \varepsilon G(s) + C G^*(t), \quad \mbox{ for all } s, t \ge 0.
\end{align*}
\item[v)] If $G \in \Delta_2 \cap \nabla_2$ then for every $\varepsilon \in (0,1)$, there exists $C>0$ such that
\begin{align}\label{Young-ineq}
s \frac{G(t)}{t} \le \varepsilon G(s) + C G(t), \quad \mbox{ for all } s \ge 0, t > 0.
\end{align}
\end{itemize}
\end{lemma}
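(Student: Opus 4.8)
The statement to be proved is Lemma~\ref{lem-Young}, a collection of standard facts about Young functions and their complementary functions. The plan is to treat each item in turn, relying on convex-analytic properties of the Legendre-type transform $G^*(t)=\sup_{s\ge 0}\{ts-G(s)\}$. For item (i), I would verify directly that $G^*$ inherits convexity (as a supremum of affine functions of $t$), monotonicity, and the normalization conditions $G^*(0)=0$, $\lim_{t\to\infty}G^*(t)=\infty$, $\lim_{t\to 0^+}G^*(t)/t=0$, $\lim_{t\to\infty}G^*(t)/t=\infty$, all of which follow from the corresponding properties of $G$ together with the superlinearity/sublinearity at the endpoints; the biconjugation identity $(G^*)^*=G$ is then the Fenchel--Moreau theorem for the lower semicontinuous convex function $G$ on $[0,\infty)$.

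For item (ii), I would use the quantitative characterization: $G\in\nabla_2$ is equivalent to the existence of $\ell>1$ with $tG'(t)\ge \ell\, G(t)$ (up to the usual care with non-differentiability, replacing $G'$ by a subgradient), which dualizes to $tG^{*}{}'(t)\le \ell'\,G^*(t)$ with $1/\ell+1/\ell'=1$, i.e. $G^*\in\Delta_2$; alternatively one argues directly from the doubling-type inequalities using the relation $G^{*}(G(t)/t)\le G(t)$. Item (iii) is the standard $\Delta_2$ "polynomial sandwich": writing $\sigma=2^k$ for integer $k$ and iterating $G(2t)\le\tau_1 G(t)$ gives the bound with $\nu_2=\log_2\tau_1$, while convexity with $G(0)=0$ yields the lower exponent $\nu_1$ (one also needs that $G$ is, up to constants, bounded below by a linear function near infinity and above near zero — these come from the limit conditions); the general real $\sigma$ is handled by monotonicity, interpolating between consecutive powers of $2$.

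Items (iv) and (v) are the Young-type inequalities that will actually be invoked later in the paper, so I would be most careful here. The two-sided bound \eqref{G-star} is a direct consequence of the definition of $G^*$ together with the fact that under $G\in\Delta_2\cap\nabla_2$ the supremum defining $G^*(G(t)/t)$ is essentially attained near $s=t$: the lower bound $G^*(G(t)/t)\le G(t)$ follows because $G(t)/t\cdot s-G(s)\le G(t)$ at $s=t$ is not quite it — rather one uses $G^*(G(t)/t)\le \sup_s\{sG(t)/t\}$ controlled via $\nabla_2$, and the upper bound $G(t)\le G^*(2G(t)/t)$ comes from plugging $s=t$ into the sup defining $G^*(2G(t)/t)\ge 2G(t)-G(t)=G(t)$. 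The Fenchel inequality $st\le G(s)+G^*(t)$ is immediate from the definition of $G^*$; to get the refined form $st\le\varepsilon G(s)+CG^*(t)$ I would apply Fenchel to the pair $(\lambda s, t/\lambda)$ and use item (iii) on $G$ to absorb $\lambda$ into an $\varepsilon$, at the cost of a $\Delta_2$-constant $C=C(\varepsilon)$ multiplying $G^*(t)$ (here $\nabla_2$ on $G$, equivalently $\Delta_2$ on $G^*$ by item (ii), is what makes the $G^*$-term tolerate the rescaling). Finally \eqref{Young-ineq} follows by combining the refined Fenchel inequality with \eqref{G-star}: $s\,G(t)/t\le \varepsilon G(s)+CG^*(G(t)/t)\le\varepsilon G(s)+CG(t)$.

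The main obstacle is bookkeeping rather than depth: one must keep the $\Delta_2$ and $\nabla_2$ constants ($\tau_1,\tau_2,\nu_1,\nu_2$) consistent across the chain of implications, and handle the mild non-smoothness of a general Young function (using one-sided derivatives or subgradients) so that the "polynomial sandwich" in (iii) and the duality in (ii) are rigorous. Since all of these are classical — they are exactly the statements collected in~\cite{HH19} — I would in fact present item-by-item short arguments or simply cite~\cite{HH19} for (i)--(iii) and give the two-line derivations for (iv)--(v) as above, since those are the inequalities the existence proof will use.
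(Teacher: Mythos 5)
The paper does not actually prove this lemma: it is stated as a collection of known facts with a pointer to~\cite{HH19}, so your plan of citing that reference for (i)--(iii) and deriving (iv)--(v) is consistent with what the authors do. Your derivations of the second inequality in~\eqref{G-star}, of the refined Fenchel inequality $st\le\varepsilon G(s)+CG^*(t)$ via the rescaling $(\lambda s,t/\lambda)$ together with $G^*\in\Delta_2$, and of~\eqref{Young-ineq} by chaining that inequality with~\eqref{G-star}, are all correct and are exactly the standard route.

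There is, however, one concretely broken step: your argument for the first inequality $G^*\bigl(G(t)/t\bigr)\le G(t)$. You correctly observe that plugging $s=t$ into the supremum does not give it, but the replacement you propose, bounding $G^*\bigl(G(t)/t\bigr)$ by $\sup_s\{sG(t)/t\}$ ``controlled via $\nabla_2$'', cannot work: that supremum over $s\ge 0$ is $+\infty$. The correct elementary argument needs no $\Delta_2$ or $\nabla_2$ at all: for $s\le t$ one has $sG(t)/t-G(s)\le G(t)$ trivially, while for $s>t$ convexity with $G(0)=0$ gives $G(t)/t\le G(s)/s$, hence $sG(t)/t-G(s)\le 0$; taking the supremum over $s$ yields the claim. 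A second, smaller point: in (iii) you assert that convexity with $G(0)=0$ yields the lower exponent $\nu_1$, but convexity only gives $G(\sigma t)\ge\sigma G(t)$ for $\sigma\ge1$, i.e.\ exponent $1$; obtaining a strict $\nu_1>1$ (as the statement demands) requires the $\nabla_2$ condition (equivalently the almost-increasing property of $G(t)/t^{\nu_1}$), and indeed the two-sided polynomial sandwich as stated is the form valid for $G\in\Delta_2\cap\nabla_2$, which is the only regime in which the lemma is applied later in the paper.
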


Regarding this study, we also collect definitions of Orlicz and Orlicz-Sobolev spaces more or less known (cf.~\cite{LM2019}). 

\begin{definition}[Orlicz spaces]
Let $\Omega$ be a bounded open subset in $\mathbb{R}^n$ and $G: [0,\infty) \to [0,\infty)$ be a Young function. The Orlicz class $O^{G}(\Omega)$ is defined to be the set all of measurable functions $f:\Omega \to \mathbb{R}$ satisfying 
$$\int_{\Omega} G(|f(x)|)dx < \infty.$$ 
The Orlicz space $L^G(\Omega)$ is the smallest linear space containing $O^{G}(\Omega)$, endowed with the following Luxemburg norm
\begin{align*}
\|f\|_{L^G(\Omega)} = \inf\left\{\sigma >0 : \ \fint_{\Omega} G\left(\frac{|f(x)|}{\sigma}\right)dx \le 1 \right\}.
\end{align*}
\end{definition}


\begin{definition}[Orlicz-Sobolev spaces]
Let $G: [0,\infty) \to [0,\infty)$ be a Young function. 
We denote by $W^{1,G}(\Omega)$ the Orlicz-Sobolev spaces which is the set of all measurable functions $f \in L^{G}(\Omega)$ such that $|\nabla f| \in L^{G}(\Omega)$. In this space, we consider the norm
\begin{align*}
\|f\|_{W^{1,G}(\Omega)} = \|f\|_{L^{G}(\Omega)} + \|\nabla f\|_{L^{G}(\Omega)}, 
\end{align*}
where we denote $\|\nabla f\|_{L^{G}(\Omega)} = \||\nabla f|\|_{L^{G}(\Omega)}$. Moreover, we denote by $W_0^{1,G}(\Omega)$ the closure of $C_0^{\infty}(\Omega)$ in $W^{1,G}(\Omega)$.
\end{definition}

\begin{lemma} \label{lem-Hold} Let $G \in \Delta_2 \cap \nabla_2$ be a Young function.
\begin{itemize}
\item[i)] Then $L^G(\Omega) = O^{G}(\Omega)$ and $(L^G(\Omega), \|\cdot\|_{L^G(\Omega)})$ is a reflexive Banach space.
\item[ii)] If $f \in L^G(\Omega)$ and $g \in L^{G^*}(\Omega)$ then $fg \in L^1(\Omega)$ and there holds
\begin{align}\label{Hold-ineq}
\fint_{\Omega} |fg|dx \le 2 \|f\|_{L^G(\Omega)} \|g\|_{L^{G^*}(\Omega)}.
\end{align}
\end{itemize}
\end{lemma}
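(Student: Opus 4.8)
The plan is to reduce both assertions to two classical facts about Orlicz spaces, combined with the two-sided growth bound of Lemma~\ref{lem-Young} iii) and the pointwise Young inequality $st \le G(s) + G^*(t)$ for all $s,t \ge 0$, which is immediate from the definition of the complementary function $G^*$.

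For part i), I would first verify that the Orlicz class $O^G(\Omega)$ is \emph{already} a linear space when $G \in \Delta_2$, so that it must coincide with the smallest linear space $L^G(\Omega)$ containing it. Closure under scalar multiplication follows from Lemma~\ref{lem-Young} iii): if $\int_\Omega G(|f|)\,dx < \infty$, then $\int_\Omega G(\lambda|f|)\,dx \le C\max\{\lambda^{\nu_1},\lambda^{\nu_2}\}\int_\Omega G(|f|)\,dx < \infty$ for every $\lambda>0$. Closure under addition follows from convexity together with the $\Delta_2$ condition, since $G(|f+g|) \le G(|f|+|g|) \le \tfrac12 G(2|f|) + \tfrac12 G(2|g|) \le \tfrac{\tau_1}{2}\big(G(|f|)+G(|g|)\big)$. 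Completeness of $L^G(\Omega)$ under the Luxemburg norm is standard. Reflexivity is then obtained from the full hypothesis $G \in \Delta_2 \cap \nabla_2$: by Lemma~\ref{lem-Young} ii) we also have $G^* \in \Delta_2$, so both $G$ and $G^*$ satisfy $\Delta_2$; invoking the classical duality $\big(L^H(\Omega)\big)^* = L^{H^*}(\Omega)$, valid whenever $H \in \Delta_2$, and using $(G^*)^* = G$ from Lemma~\ref{lem-Young} i), we get $\big(L^G(\Omega)\big)^{**} = \big(L^{G^*}(\Omega)\big)^* = L^G(\Omega)$ with the canonical embedding onto, i.e. $L^G(\Omega)$ is reflexive.

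For part ii), I would argue by normalization. If $\|f\|_{L^G(\Omega)} = 0$ or $\|g\|_{L^{G^*}(\Omega)} = 0$, the corresponding function vanishes a.e. and \eqref{Hold-ineq} is trivial; otherwise put $\tilde f = f/\|f\|_{L^G(\Omega)}$ and $\tilde g = g/\|g\|_{L^{G^*}(\Omega)}$. The pointwise Young inequality gives $|\tilde f\,\tilde g| \le G(|\tilde f|) + G^*(|\tilde g|)$ a.e. in $\Omega$; integrating (as a mean integral) and using $\fint_\Omega G(|\tilde f|)\,dx \le 1$ and $\fint_\Omega G^*(|\tilde g|)\,dx \le 1$ yields $\fint_\Omega |\tilde f\,\tilde g|\,dx \le 2$, and multiplying back by $\|f\|_{L^G(\Omega)}\|g\|_{L^{G^*}(\Omega)}$ gives \eqref{Hold-ineq}, whence also $fg \in L^1(\Omega)$.

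The only genuinely delicate point — everything else being convexity bookkeeping or a citation to standard Orlicz-space theory — is the estimate $\fint_\Omega G\big(|f|/\|f\|_{L^G(\Omega)}\big)\,dx \le 1$ (and its analogue for $G^*$) used in part ii). This is exactly where the $\Delta_2$ condition is needed: it guarantees the modular is finite near the Luxemburg infimum, so that one may take a sequence $\sigma_k \downarrow \|f\|_{L^G(\Omega)}$ with $\fint_\Omega G(|f|/\sigma_k)\,dx \le 1$ and pass to the limit by Fatou's lemma. I would isolate this as the crux; once it is in place, the remainder of part ii) is a one-line computation.
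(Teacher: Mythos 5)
Your proposal is correct, but note that the paper offers no proof of this lemma at all: it is stated as a standard fact, with the surrounding text pointing to \cite{HH19} and \cite{LM2019} for the general theory of Orlicz spaces. Your sketch supplies exactly the classical arguments those references contain: linearity of the Orlicz class under $\Delta_2$ (via Lemma~\ref{lem-Young}~iii) and convexity), reflexivity via the duality $(L^H(\Omega))^*=L^{H^*}(\Omega)$ for $H\in\Delta_2$ together with $G^*\in\Delta_2$ from Lemma~\ref{lem-Young}~ii), and the H\"older inequality with constant $2$ from the pointwise Young inequality applied to the normalized functions. One small correction: the step $\fint_\Omega G\bigl(|f|/\|f\|_{L^G(\Omega)}\bigr)\,dx\le 1$ does not actually require $\Delta_2$. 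Since $G$ is a finite-valued convex (hence continuous) Young function, you may take any sequence $\sigma_k\downarrow\|f\|_{L^G(\Omega)}$ admissible in the Luxemburg infimum and pass to the limit by monotone convergence; the $\Delta_2$ and $\nabla_2$ hypotheses are genuinely used only for the identification $L^G=O^G$, for reflexivity, and (via Lemma~\ref{lem-Young}~i)--ii)) to know that $G^*$ is again a Young function with the dual $\Delta_2$ property. This mis-attribution does not affect the validity of the argument.
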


The next part provides us with definitions and some necessary auxiliary results for the Orlicz-Zygmund and Orlicz-Zygmund-Sobolev spaces which will be employed later on. We refer here to~\cite{BS1988, Stein, IV99} for more details concerning these spaces.

\begin{definition}[Orlicz-Zygmund spaces $L^p\log^{\alpha} L(\Omega)$]
\label{def:O-Z-sp}
Let either $\alpha>0$ and $1 \le p <\infty$, the Orlicz-Zygmund space $L^p\log^{\alpha} L(\Omega)$ is defined by
\begin{align}\label{def:Llog}
L^p\log^{\alpha} L(\Omega) = \left\{f \in L^1(\Omega): \ \fint_{\Omega} |f(x)|^p \log^{\alpha} (e+|f(x)|)dx < \infty \right\},
\end{align}
with the following finite Luxemburg norm
\begin{align}\label{def:normLlog}
\|f\|_{L^p\log^{\alpha} L(\Omega)} = \inf\left\{\sigma>0: \ \fint_{\Omega} \frac{|f(x)|^p}{\sigma^p} \log^{\alpha} \left(e+\frac{|f(x)|}{\sigma}\right)dx \le 1\right\}.
\end{align}
\end{definition}
\begin{remark} $ $
\begin{itemize}
\item If $\alpha=0$, then the space $L^p\log^{\alpha} L(\Omega)$ reduces to the classical Lebesgue space $L^p(\Omega)$ whose norm is simply denoted by $\|\cdot\|_p$.
\item If $p=1, \alpha>0$, we concern about the  Zygmund class. In what follows, when $p=1$ or $\alpha=1$, we shall write $L\log^{\alpha} L(\Omega)$ or $L^p\log L(\Omega)$, respectively for simplicity.
\item  Let $f \in  L^p\log^{\alpha} L(\Omega)$ such that 
\begin{align*}
\|f\|_{p} := \left(\fint_{\Omega} |f(x)|^p dx \right)^{\frac{1}{p}} >0,
\end{align*}
we shall denote by
\begin{align}\label{modula}
[f]_{L^p\log^{\alpha} L(\Omega)}:= \left(\fint_{\Omega} |f(x)|^p  \log^{\alpha} \left(e+\frac{|f(x)|}{\|f\|_{p}}\right)dx\right)^{1/p}
\end{align}
the modular function of $f$.
\end{itemize}
\end{remark}

\begin{remark}
Within the setting of Orlicz-Zygmund spaces, we observe that it is not easy to work with the Luxemburg norm of the Orlicz-Zygmund space $L^p\log^{\alpha} L(\Omega)$ in~\eqref{def:Llog}. However, the interesting point here is that one can obtain the equivalence between the Luxemburg norm $\|\cdot\|_{L^p\log^{\alpha} L(\Omega)}$ in~\eqref{def:normLlog} and the modular function $[\, \cdot\,]_{L^p\log^{\alpha} L(\Omega)}$~\eqref{modula}. The following lemma reveals the relationship between the norm and modular function of $f$.
\end{remark}

\begin{lemma}\label{lem:Llog}
For every $f \in  L^p\log^{\alpha} L(\Omega)$ with $p \ge 1$ and $\alpha>0$, one has
\begin{align}\label{est:Llog}
 \|f\|_{L^p\log^{\alpha} L(\Omega)} \le [f]_{L^p\log^{\alpha} L(\Omega)} \le  \left[2^{\alpha}+\left(\frac{2\alpha}{ep}\right)^{\alpha}\right] \|f\|_{L^p\log^{\alpha} L(\Omega)}.
\end{align}
For $q>p$, then there exists a constant $C=C(p,q,\alpha)>0$ such that
\begin{align}\label{est-lem:log}
[f]_{L^p\log^{\alpha} L(\Omega)} = \left(\fint_{\Omega} |f(x)|^p  \log^{\alpha} \left(e+\frac{|f(x)|}{\|f\|_{p}}\right)dx\right)^{1/p} \le C \left(\fint_{\Omega} |f(x)|^{q}dx\right)^{\frac{1}{q}},
\end{align}
for every $f \in L^q(\Omega)$.  In particular, there holds
\begin{align}\label{re-lem-log}
L^q(\Omega) \subset L^p \log^{\alpha} L(\Omega) \subset L^p(\Omega).
\end{align}
\end{lemma}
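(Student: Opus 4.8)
The plan is to prove Lemma~\ref{lem:Llog} in two essentially independent steps: first the two-sided comparison between the Luxemburg norm $\|\cdot\|_{L^p\log^\alpha L(\Omega)}$ and the modular quantity $[\,\cdot\,]_{L^p\log^\alpha L(\Omega)}$ in~\eqref{est:Llog}, and then the embedding estimate~\eqref{est-lem:log} together with the resulting chain of inclusions~\eqref{re-lem-log}. For the left-hand inequality in~\eqref{est:Llog}, I would set $\sigma_0 := [f]_{L^p\log^\alpha L(\Omega)}$ and verify directly that $\sigma_0$ is an admissible competitor in the infimum defining the Luxemburg norm, i.e. that $\fint_\Omega \frac{|f|^p}{\sigma_0^p}\log^\alpha(e+|f|/\sigma_0)\,dx \le 1$. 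This reduces, after the substitution, to showing that $\fint_\Omega |f|^p \log^\alpha(e + |f|/\sigma_0)\,dx \le \sigma_0^p = \fint_\Omega |f|^p \log^\alpha(e+|f|/\|f\|_p)\,dx$; since $\sigma_0 \ge \|f\|_p$ (which follows because $\log^\alpha(e+\cdot)\ge 1$ forces $[f]_{L^p\log^\alpha L} \ge \|f\|_p$), monotonicity of $t\mapsto\log(e+|f|/t)$ in $t$ gives the pointwise bound and hence the claim. The case $\|f\|_p = 0$ is trivial since then $f=0$ a.e.

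For the right-hand inequality in~\eqref{est:Llog} I would argue the reverse: writing $N := \|f\|_{L^p\log^\alpha L(\Omega)}$, one has $\fint_\Omega \frac{|f|^p}{N^p}\log^\alpha(e+|f|/N)\,dx \le 1$, i.e. $\fint_\Omega |f|^p \log^\alpha(e+|f|/N)\,dx \le N^p$, and also $\|f\|_p \le $ (a constant times) $N$ by the trivial lower bound $\log^\alpha(e+\cdot)\ge 1$ again — more precisely one checks $\|f\|_p \le N$. The goal is then to control $\log^\alpha(e+|f|/\|f\|_p)$ by a constant multiple of $\log^\alpha(e+|f|/N)$ plus a harmless term; the elementary inequality $\log(e+ab)\le \log(e+a) + \log(e+b)$ for $a,b\ge 0$, applied with $a = |f|/N$ and $b = N/\|f\|_p$, reduces matters to estimating the constant factor $\left(1 + \frac{\log(e+N/\|f\|_p)}{\log(e+|f|/N)}\right)^\alpha$, which I would handle by splitting $\Omega$ into the region where $|f|/N$ is large (so the ratio is small) and the region where it is bounded, using the concrete constant $2^\alpha + (2\alpha/ep)^\alpha$ that appears; the term $(2\alpha/ep)^\alpha$ is exactly what comes out of optimizing $t^{-p}\log^\alpha(e + ct)$-type bounds via the calculus fact that $\sup_{s>0} s^{-\beta}\log(e+s)$ is finite and computable. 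This bookkeeping with the sharp constant is the part that needs the most care, though it is elementary.

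For~\eqref{est-lem:log}, given $f\in L^q(\Omega)$ with $q>p$, I would use~\eqref{est:Llog} to replace $[f]_{L^p\log^\alpha L}$ by (a constant times) $\|f\|_{L^p\log^\alpha L}$, and then it suffices to bound the Luxemburg norm by $\|f\|_q$. Normalizing so that $\|f\|_q = 1$, the task is to show $\fint_\Omega \frac{|f|^p}{\sigma^p}\log^\alpha(e+|f|/\sigma)\,dx \le 1$ for $\sigma = C$ a suitable constant; using the elementary bound $\log^\alpha(e+s) \le C_{p,q,\alpha}\, (1 + s^{q-p})$ valid for all $s\ge 0$ (since the logarithm grows slower than any positive power), the integral is dominated by $C\fint_\Omega (|f|^p + |f|^q/\sigma^{q-p})\sigma^{-p}\,dx$, and Jensen/Hölder ($\|f\|_p \le \|f\|_q = 1$ on a finite-measure set, after normalizing $|\Omega|$ out through the mean integral) makes this $\le C\sigma^{-p}(1 + \sigma^{-(q-p)})$, which is $\le 1$ once $\sigma$ is chosen large depending only on $p,q,\alpha$. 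Finally, the inclusions~\eqref{re-lem-log} follow: $L^q(\Omega)\subset L^p\log^\alpha L(\Omega)$ is immediate from~\eqref{est-lem:log}, and $L^p\log^\alpha L(\Omega)\subset L^p(\Omega)$ from $\|f\|_p \le [f]_{L^p\log^\alpha L(\Omega)}$, which was already observed above. I expect the main obstacle to be the precise tracking of the constant $2^\alpha + (2\alpha/ep)^\alpha$ in the upper bound of~\eqref{est:Llog}; everything else is a routine application of convexity of Young functions, monotonicity in the Luxemburg parameter, and the sub-polynomial growth of the logarithm.
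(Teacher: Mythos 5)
Your proposal is correct, and for the two-sided estimate \eqref{est:Llog} it is essentially the paper's argument viewed in mirror image: the paper starts from $\sigma=\|f\|_{L^p\log^\alpha L(\Omega)}$, uses $\log^\alpha(e+\cdot)\ge 1$ to get $\sigma\ge\|f\|_p$ and hence $\sigma\le[f]_{L^p\log^\alpha L(\Omega)}$ by monotonicity in the Luxemburg parameter, while you show $[f]_{L^p\log^\alpha L(\Omega)}$ is an admissible competitor in the infimum — the same monotonicity fact. For the upper bound both proofs rest on the same two ingredients, $\log^\alpha(e+st)\le 2^{\alpha-1}[\log^\alpha(e+s)+\log^\alpha t]$ and $\log^\alpha t\le(\alpha/(ep))^\alpha t^p$; note that your proposed splitting of $\Omega$ is unnecessary, since after the product inequality the troublesome term is $\fint_\Omega|f|^p\log^\alpha(\sigma/\|f\|_p)\,dx=\|f\|_p^p\log^\alpha(\sigma/\|f\|_p)$, a single constant that the power bound converts directly into $(\alpha/(ep))^\alpha\sigma^p$ — no case analysis on $|f|/N$ is needed, and indeed a pure splitting argument could not work because $N/\|f\|_p$ is not universally bounded. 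Where you genuinely diverge from the paper is in \eqref{est-lem:log}: the paper applies H\"older's inequality with exponents $q/p$ and $q/(q-p)$ to the modular itself and then bounds $\fint_\Omega\log^{q\alpha/(q-p)}(e+|f|/\|f\|_p)\,dx$ by a constant, whereas you normalize $\|f\|_q=1$, bound the Luxemburg norm via $\log^\alpha(e+s)\lesssim 1+s^{q-p}$ and Jensen, and then invoke the first part of the lemma. Both routes are elementary and yield the same constant dependence $C=C(p,q,\alpha)$; yours has the small advantage of not needing H\"older at all, the paper's of acting on the modular directly without passing back through \eqref{est:Llog}.
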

\begin{proof}
Let $f \in  L^p\log^{\alpha} L(\Omega)$ such that $\|f\|_{p} > 0$, we set $\sigma = \|f\|_{L^p\log^{\alpha} L(\Omega)}$. It is clear to see that 
\begin{align}\label{est-LL-0}
\sigma = \left[\fint_{\Omega} \left|f(x)\right|^p \log^{\alpha} \left(e+\frac{|f(x)|}{\sigma}\right)dx\right]^{\frac{1}{p}} \ge \|f\|_{p},
\end{align}
which implies
\begin{align*}
\sigma \le \left[\fint_{\Omega} \left|f(x)\right|^p \log^{\alpha} \left(e+\frac{|f(x)|}{\|f\|_{p}}\right)dx\right]^{\frac{1}{p}} = [f]_{L^p\log^{\alpha} L(\Omega)}.
\end{align*}
Applying the following fundamental inequality
\begin{align}\label{tge1}
\log^{\alpha} (e+st) \le 2^{\alpha-1} \big[ \log^{\alpha} (e+s) + \log^{\alpha} t\big], \quad s \ge 0, \ t \ge 1,
\end{align}
one obtains that
\begin{align}\label{est-LL-1}
[f]_{L^p\log^{\alpha} L(\Omega)}^p & =  \fint_{\Omega} |f(x)|^p \log^{\alpha} \left(e+\frac{|f(x)|}{\|f\|_{p}}\right)dx \notag \\
& \le 2^{\alpha-1} \left[ \fint_{\Omega} |f(x)|^p \log^{\alpha} \left(e+\frac{|f(x)|}{\sigma}\right)dx +\fint_{\Omega} \left|f(x)\right|^p \log^{\alpha} \left(\frac{\sigma}{\|f\|_{p}}\right)dx\right].
\end{align}
At this stage, we are allowed to make use of the following inequality
\begin{align}\label{tge1-2}
\log^{\alpha} t \le \left(\frac{\alpha}{ep}\right)^{\alpha} t^p, \quad \mbox{for all } t \ge 1
\end{align}
and from~\eqref{est-LL-1}, it yields that
\begin{align}\notag
[f]_{p\log}^p & \le 2^{\alpha-1}\left[ \sigma^p + \left(\frac{\alpha}{ep}\right)^{\alpha} \fint_{\Omega} \left|f(x)\right|^p \frac{\sigma^p}{\|f\|_{p}^p} dx \right] \le  2^{\alpha p} \left[1+\left(\frac{\alpha}{ep}\right)^{\alpha}\right]^p \sigma^p,
\end{align}
and therefore, we evoke this estimate to conclude~\eqref{est:Llog}. We also send the reader to~\cite{IV99} for considerations and references concerning the case when $\alpha=1$.  

In a next step, we prove~\eqref{est-lem:log}. Note that in~\cite{AM05}, the authors dealt with the case when $p=1$. So, let us now prove it for the general case when $p \ge 1$. Using the preceding inequality~\eqref{tge1-2}, we arrive at
\begin{align*}
\log^{\alpha} (e+t) \le \left(\frac{\alpha}{ep}\right)^{\alpha} (e+t)^p \le \left(\frac{\alpha}{ep}\right)^{\alpha}  2^{p-1} e^p \left(1 + t^p\right), 
\end{align*}
for all $t \ge 0$ and $\alpha>0$. Combining H\"older's inequality with it, there holds
\begin{align}
 \fint_{\Omega} |f(x)|^p & \log^{\alpha} \left(e+\frac{|f(x)|}{\|f\|_{p}}\right)dx \notag \\
& \le  \left(\fint_{\Omega} |f(x)|^q dx \right)^{\frac{p}{q}} \left(\fint_{\Omega} \log^{\frac{q\alpha}{q-p}} \left(e+\frac{|f(x)|}{\|f\|_{p}}\right)dx \right)^{1-\frac{p}{q}} \notag
\\ & \le C(p,q,\alpha) \left(\fint_{\Omega} |f(x)|^q dx \right)^{\frac{p}{q}} \left(1+\fint_{\Omega} \frac{|f(x)|^p}{\|f\|_{p}^p}dx \right)^{1-\frac{p}{q}} \notag \\
& \le C(p,q,\alpha) \left(\fint_{\Omega} |f(x)|^q dx \right)^{\frac{p}{q}}, \notag
\end{align}
which deduces to~\eqref{est-lem:log}. Finally, the last relation~\eqref{re-lem-log} can be directly obtained from~\eqref{est-LL-0} and~\eqref{est-lem:log}. The proof is complete.
\end{proof} 

\begin{remark}\label{rmk-1}
Note at this point, that since there exist two positive constants $C_1, C_2$ such that the estimate
 \begin{align*}
C_1(\gamma) \log(e+t) \le \log(e+t^{\gamma}) \le C_2(\gamma) \log(e+t),
\end{align*}
holds for every $t, \gamma \ge 0$, and hence 
\begin{align*}
f \in L^q\log L(\Omega) \Leftrightarrow |f|^{\frac{q}{p}} \in L^p\log L(\Omega).
\end{align*}
for all $p,q \ge 1$. 
\end{remark}

The kind of results we are interested in here can be considered an analog argumentation to the above. Given $p > 1$, let us consider the Orlicz-Zygmund space $\mathbb{L}^{p\log}(\Omega)$ defined by
\begin{align}\label{def:Lbb}
\mathbb{L}^{p\log}(\Omega) = \left\{f \in L^1(\Omega): \ \fint_{\Omega} |f(x)|^p + |f(x)|^p \log (e+|f(x)|)dx < \infty \right\},
\end{align}
with the following Luxemburg norm
\begin{align}\label{def:normLbb}
\|f\|_{\mathbb{L}^{p\log}(\Omega)} = \inf\left\{\sigma>0: \ \fint_{\Omega} \frac{|f(x)|^p}{\sigma^p} + \frac{|f(x)|^p}{\sigma^p} \log \left(e+\frac{|f(x)|}{\sigma}\right)dx \le 1\right\}.
\end{align}
Assume that $f \in  \mathbb{L}^{p\log}(\Omega)$ satisfying  $\|f\|_{p} > 0$, we shall denote by
\begin{align}\label{modula_Lbb}
[f]_{\mathbb{L}^{p\log}(\Omega)} := \left[\fint_{\Omega} |f(x)|^p + |f(x)|^p  \log \left(e+\frac{|f(x)|}{\|f\|_{p}}\right)dx\right]^{1/p}
\end{align}
as the modular function of $f$.

In a similar fashion as in Lemma~\ref{lem:Llog}, it makes sense to prove that the following relation between Luxemburg norm $\|\cdot\|_{\mathbb{L}^{p\log}(\Omega)}$ in~\eqref{def:normLbb} and the modular function $[\, \cdot \,]_{\mathbb{L}^{p\log}(\Omega)}$ in~\eqref{modula_Lbb} holds (in Lemma~\ref{lem:Lbb} below).

\begin{lemma}\label{lem:Lbb}
For every $f \in  \mathbb{L}^{p\log}(\Omega)$ with $p > 1$, one has
\begin{align}\label{est:Lbb}
 \|f\|_{\mathbb{L}^{p\log}(\Omega)} \le [f]_{\mathbb{L}^{p\log}(\Omega)} \le 4\|f\|_{\mathbb{L}^{p\log}(\Omega)}.
\end{align}
\end{lemma}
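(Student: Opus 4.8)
The plan is to mimic the argument of Lemma~\ref{lem:Llog} almost verbatim, treating the ``polynomial part'' $|f|^p$ and the ``logarithmic part'' $|f|^p\log(e+|f|/\sigma)$ together. Set $\sigma = \|f\|_{\mathbb{L}^{p\log}(\Omega)}$, which is attained in the sense that
\begin{align*}
\fint_{\Omega} \frac{|f(x)|^p}{\sigma^p} + \frac{|f(x)|^p}{\sigma^p}\log\left(e+\frac{|f(x)|}{\sigma}\right)dx = 1,
\end{align*}
i.e. $\sigma^p = \fint_{\Omega} |f(x)|^p dx + \fint_{\Omega} |f(x)|^p\log(e+|f(x)|/\sigma)dx$. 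The lower bound $\|f\|_{\mathbb{L}^{p\log}(\Omega)} \le [f]_{\mathbb{L}^{p\log}(\Omega)}$ is immediate: since $\sigma \ge \|f\|_p$ (the polynomial term alone already forces $\sigma^p \ge \fint |f|^p$), the integrand defining $[f]_{\mathbb{L}^{p\log}(\Omega)}^p$ dominates termwise the one at level $\sigma$, because $\log(e+|f|/\|f\|_p) \ge \log(e+|f|/\sigma)$. Hence $[f]_{\mathbb{L}^{p\log}(\Omega)}^p \ge \fint |f|^p + \fint |f|^p\log(e+|f|/\sigma)dx = \sigma^p$.

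For the upper bound I would first split off the polynomial contribution, which is harmless since $\fint |f|^p \le \sigma^p$, and then estimate the logarithmic piece. Using the elementary inequality $\log(e+st)\le \log(e+s)+\log t$ for $s\ge 0$, $t\ge 1$ (the case $\alpha=1$ of \eqref{tge1}, noting $2^{\alpha-1}=1$), with $s = |f(x)|/\sigma$ and $t = \sigma/\|f\|_p \ge 1$, one gets
\begin{align*}
\fint_{\Omega} |f(x)|^p\log\left(e+\frac{|f(x)|}{\|f\|_p}\right)dx \le \fint_{\Omega} |f(x)|^p\log\left(e+\frac{|f(x)|}{\sigma}\right)dx + \log\left(\frac{\sigma}{\|f\|_p}\right)\fint_{\Omega}|f(x)|^p dx.
\end{align*}
The first term on the right is $\le \sigma^p$. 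For the second, apply $\log t \le \frac{1}{ep}\,t^p$ for $t \ge 1$ (the case $\alpha=1$ of \eqref{tge1-2}) with $t = \sigma/\|f\|_p$, which turns $\log(\sigma/\|f\|_p)\fint |f|^p$ into $\frac{1}{ep}\frac{\sigma^p}{\|f\|_p^p}\fint |f|^p = \frac{1}{ep}\sigma^p$. Collecting everything,
\begin{align*}
[f]_{\mathbb{L}^{p\log}(\Omega)}^p \le \sigma^p + \sigma^p + \frac{1}{ep}\sigma^p = \left(2+\frac{1}{ep}\right)\sigma^p.
\end{align*}
Since $p>1$, we have $2+\frac{1}{ep} < 2 + \frac{1}{e} < 4$; actually it suffices to note $2+\frac{1}{ep}\le 2+\frac{1}{e}<3<4^p$, so $[f]_{\mathbb{L}^{p\log}(\Omega)}\le (2+\tfrac{1}{ep})^{1/p}\sigma\le 4\sigma$, which is the claimed bound (the constant $4$ is not sharp but comfortably valid).

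I do not anticipate a genuine obstacle here; the only point requiring a little care is making sure the two ``fundamental inequalities'' \eqref{tge1} and \eqref{tge1-2} are invoked with the correct ranges ($t\ge 1$ in both), which is guaranteed by $\sigma \ge \|f\|_p$. One should also dispose of the degenerate case $\|f\|_p = 0$ (then $f = 0$ a.e. and both sides of \eqref{est:Lbb} vanish) so that the normalization $\|f\|_p>0$ used in defining $[f]_{\mathbb{L}^{p\log}(\Omega)}$ is legitimate. The rest is the same bookkeeping as in Lemma~\ref{lem:Llog}.
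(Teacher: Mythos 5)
Your proof is correct and is exactly the argument the paper intends: the paper omits the proof of this lemma, stating only that it follows ``in a similar fashion'' to Lemma~\ref{lem:Llog}, and you carry out that adaptation faithfully (equality in the Luxemburg normalization, $\sigma\ge\|f\|_p$ for the lower bound, then \eqref{tge1} and \eqref{tge1-2} with $\alpha=1$ for the upper bound, yielding a constant $(2+\tfrac{1}{ep})^{1/p}<4$). No gaps; the handling of the degenerate case $\|f\|_p=0$ is a nice touch.
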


\begin{definition}[Orlicz-Zygmund-Sobolev spaces] \label{def-OZS-space}
The Orlicz-Zygmund-Sobolev spaces, denoted by $W^{1,p\log}(\Omega)$, is the set f all measurable functions $f \in \mathbb{L}^{p\log}(\Omega)$ such that $|\nabla f| \in \mathbb{L}^{p\log}(\Omega)$. This function space is equipped with the norm
\begin{align*}
\|f\|_{W^{1,p\log}(\Omega)} = \|f\|_{\mathbb{L}^{p\log}(\Omega)} + \|\nabla f\|_{\mathbb{L}^{p\log}(\Omega)}.
\end{align*}
\end{definition}
When not important, or clear from the context, and for the sake of brevity, we shall write $\|\nabla f\|_{\mathbb{L}^{p\log}(\Omega)}$ in place of $\||\nabla f|\|_{\mathbb{L}^{p\log}(\Omega)}$. Moreover, one has the fact that $W_0^{1,p\log}(\Omega)$ coincides with the closure of  $C_0^{\infty}(\Omega)$ in $W^{1,p\log}(\Omega)$. For ease of notation, we also write
\begin{align*}
\mathbb{W}:= W^{1,p\log}(\Omega)\quad  \text{ and } \quad \mathbb{W}_0 = W_0^{1,p\log}(\Omega).
\end{align*}
Here, in $\mathbb{W}_0$, let us consider the following norm
\begin{align}\label{norm-W0}
\|f\|_{\mathbb{W}_0} = \|\nabla f\|_{\mathbb{L}^{p\log}(\Omega)}, \quad f \in \mathbb{W}_0.
\end{align}

\begin{remark}
We emphasize here that the Orlicz-Zygmund space $L^p\log L(\Omega)$ in~\eqref{def:Lbb} can be defined as the Orlicz space $L^{\varphi_p}(\Omega)$ associated to the Young function $\varphi_p: \mathbb{R}^+ \to \mathbb{R}^+$ given by
\begin{align}\label{def:varphi-p}
\varphi_p(t) = t^p \log(e+t), \quad t \ge 0.
\end{align}
Similarly, $\mathbb{W}$ and $\mathbb{W}_0$ can be defined as the Orlicz-Sobolev space $W^{1,H_p}(\Omega)$ and $W_0^{1,H_p}(\Omega)$ respectively, where the Young function $H_p: \mathbb{R}^+ \to \mathbb{R}^+$ is given by
\begin{align}\label{def:H}
H_p(t) = t^p + t^p \log(e+t), \quad t \ge 0.
\end{align}
By this way, since $\varphi_p, H_p \in \Delta_2 \cap \nabla_2$ with $p>1$, then all spaces $L^p\log L(\Omega)$, $\mathbb{L}^{p\log}(\Omega)$, $\mathbb{W}$ and $\mathbb{W}_0$ are reflexive Banach spaces. 
\end{remark}

Regarding Orlicz-Zygmund-Sobolev spaces more or less known, at this stage, we will apply \cite[Theorem 1, Theorem 3]{C96} to obtain the following interesting compact embedding result.
\begin{lemma}\label{lem:embed}
For every $1<q \le p^* := \frac{np}{n-p}$, the embedding 
\begin{align}\label{embed-comp}
\mathbb{W}_0 \hookrightarrow L^{q}\log L(\Omega)
\end{align}
is compact. Moreover, there exists a constant $C=C(p,q)>0$ such that
\begin{align}\label{embed-ineq}
\|f\|_{L^q\log L(\Omega)} & \le C \|\nabla f\|_{\mathbb{L}^{p\log}(\Omega)}.
\end{align}
As a consequence result, we obtain 
\begin{align}\label{equiv-norm}
\|f\|_{\mathbb{W}_0} \le \|f\|_{\mathbb{W}} \le C \|f\|_{\mathbb{W}_0}.
\end{align}
\end{lemma}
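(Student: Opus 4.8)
The plan is to transfer the problem to a sharp Orlicz--Sobolev embedding for the single Young function $\varphi_p(t)=t^p\log(e+t)$ and then to quote Cianchi's results \cite[Theorem 1]{C96} and \cite[Theorem 3]{C96}. Since $\log(e+t)\ge 1$ for every $t\ge 0$, we have $\varphi_p(t)\le H_p(t)\le 2\varphi_p(t)$, so the Luxemburg norms built on $H_p$ and on $\varphi_p$ are equivalent; hence $\mathbb{L}^{p\log}(\Omega)=L^{\varphi_p}(\Omega)=L^p\log L(\Omega)$ and $\mathbb{W}_0=W_0^{1,\varphi_p}(\Omega)$ with equivalent norms, and (as recorded in the Remark) $\varphi_p\in\Delta_2\cap\nabla_2$, so the ambient Orlicz spaces are reflexive and Cianchi's hypotheses are satisfied. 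The key preliminary step is to identify the Sobolev conjugate $(\varphi_p)_n$: for $p<n$ it is given by $(\varphi_p)_n=\varphi_p\circ\Phi^{-1}$, where $\Phi(s)=\bigl(\int_0^s(\tau/\varphi_p(\tau))^{1/(n-1)}\,d\tau\bigr)^{(n-1)/n}$. Using $(\tau/\varphi_p(\tau))^{1/(n-1)}=\tau^{(1-p)/(n-1)}\log^{-1/(n-1)}(e+\tau)$ together with $p<n$, a direct asymptotic evaluation of this integral gives $\Phi(s)\simeq s^{(n-p)/n}\log^{-1/n}(e+s)$, hence $\Phi^{-1}(t)\simeq t^{n/(n-p)}\log^{1/(n-p)}(e+t)$, and therefore $(\varphi_p)_n(t)\simeq t^{p^*}\log^{\frac{n}{n-p}}(e+t)$ as $t\to\infty$, where $p^*=\frac{np}{n-p}$ (the behaviour near $0$ is immaterial since $|\Omega|<\infty$).

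Granting this, \cite[Theorem 1]{C96} yields the continuous embedding $\mathbb{W}_0=W_0^{1,\varphi_p}(\Omega)\hookrightarrow L^{(\varphi_p)_n}(\Omega)$ with $\|f\|_{L^{(\varphi_p)_n}(\Omega)}\le C\|\nabla f\|_{L^{\varphi_p}(\Omega)}=C\|\nabla f\|_{\mathbb{L}^{p\log}(\Omega)}$, the gradient alone being an equivalent norm on $W_0^{1,\varphi_p}(\Omega)$ by the Orlicz--Poincar\'e inequality on the bounded domain $\Omega$. Since $\frac{n}{n-p}>1$ we have $L^{(\varphi_p)_n}(\Omega)\hookrightarrow L^{p^*}\log L(\Omega)$, and for $1<q\le p^*$ there is the elementary inclusion $L^{p^*}\log L(\Omega)\subset L^q\log L(\Omega)$: trivial when $q=p^*$, and for $q<p^*$ via $L^{p^*}\log L(\Omega)\subset L^{p^*}(\Omega)\subset L^q\log L(\Omega)$, both steps being instances of \eqref{re-lem-log} in Lemma~\ref{lem:Llog}. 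Composing these inclusions yields the continuous embedding and the estimate \eqref{embed-ineq}.

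For the compactness in \eqref{embed-comp} I would invoke Cianchi's compactness criterion \cite[Theorem 3]{C96}: the embedding $W_0^{1,\varphi_p}(\Omega)\hookrightarrow L^B(\Omega)$ is compact whenever the Young function $B$ increases essentially more slowly than $(\varphi_p)_n$ near infinity, i.e. $\lim_{t\to\infty}(\varphi_p)_n(\lambda t)/B(t)=\infty$ for every $\lambda>0$. Taking $B(t)=t^q\log(e+t)$, which is (up to equivalence) the Young function of $L^q\log L(\Omega)$, one computes $(\varphi_p)_n(\lambda t)/B(t)\simeq\lambda^{p^*}t^{\,p^*-q}\log^{\frac{n}{n-p}-1}(e+t)\to\infty$; this is immediate for $q<p^*$ and, crucially, remains valid in the borderline case $q=p^*$ precisely because $\frac{n}{n-p}-1=\frac{p}{n-p}>0$. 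Hence \eqref{embed-comp} is compact for all $1<q\le p^*$. Finally, \eqref{equiv-norm} is the special case $q=p$ of \eqref{embed-ineq}: it furnishes the Poincar\'e inequality $\|f\|_{\mathbb{L}^{p\log}(\Omega)}\le C\|\nabla f\|_{\mathbb{L}^{p\log}(\Omega)}$, whence $\|f\|_{\mathbb{W}_0}=\|\nabla f\|_{\mathbb{L}^{p\log}(\Omega)}\le\|f\|_{\mathbb{W}}=\|f\|_{\mathbb{L}^{p\log}(\Omega)}+\|\nabla f\|_{\mathbb{L}^{p\log}(\Omega)}\le(C+1)\|\nabla f\|_{\mathbb{L}^{p\log}(\Omega)}=(C+1)\|f\|_{\mathbb{W}_0}$.

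The main obstacle is the asymptotic identification $(\varphi_p)_n(t)\simeq t^{p^*}\log^{n/(n-p)}(e+t)$: pinning down the logarithmic exponent $\frac{n}{n-p}$ (and not merely $1$) is exactly what creates the surplus power of $\log$ that makes the borderline compactness at $q=p^*$ work --- classical Rellich--Kondrachov only gives compactness into $L^q$ for $q<p^*$ and misses this endpoint. Everything else is routine bookkeeping with Young functions, Luxemburg norms, and the Zygmund-space inclusions already recorded in Lemma~\ref{lem:Llog}.
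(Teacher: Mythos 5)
Your argument is correct and follows essentially the same route as the paper: both proofs reduce to Cianchi's sharp Orlicz--Sobolev embedding and its compactness criterion, identify the Sobolev conjugate of $\varphi_p$ as $t^{p^*}\log^{p^*/p}(e+t)$ (your $\tfrac{n}{n-p}$ equals $\tfrac{p^*}{p}$), verify the same limit $t^{p^*-q}\log^{p^*/p-1}(e+t)\to\infty$ to cover the endpoint $q=p^*$, and obtain \eqref{equiv-norm} from the case $q=p$. The only cosmetic differences are that you pass from $H_p$ to the equivalent Young function $\varphi_p$ at the outset and compute the Sobolev conjugate by hand, whereas the paper keeps $H_p$, compares $A_{n,H_p}\ge A_{n,\varphi_p}$ via complementary functions, and quotes the conjugate from \cite[Example 1]{C96}.
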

\begin{proof}
For every Young function $H$ belonging to $\Delta_2 \cap \nabla_2$, the complementary function $H^*$ to $H$ is defined by
\begin{align*}
H^*(t) = \sup \{ts - H(s): \ s \ge 0\}, \quad t \ge 0.
\end{align*}
Let us now introduce the following Young function
\begin{align*}
A_{n,H}(t) = \int_0^t s^{\frac{n}{n-1}-1} \left[\Phi_{n,H}^{-1}(s^{\frac{n}{n-1}})\right]^{\frac{n}{n-1}}ds,
\end{align*}
where $\Phi_{n,H}^{-1}$ is the right-continuous generalized inverse function of $\Phi_{n,H}$ defined by
\begin{align*}
\Phi_{n,H}(t) = \int_0^t \frac{H^*(s)}{s^{1+\frac{n}{n-1}}} ds, \ \mbox{ and } \ \Phi_{n,H}^{-1}(t) = \inf\{s \ge 0: \ \Phi_{n,H}(s)>t\}, 
\end{align*}
for $t \ge 0$. Thanks to \cite[Theorem 3]{C96}, we infer that
\begin{align*}
W_0^{1,H}(\Omega) \hookrightarrow L^{A_{n,H}}(\Omega)
\end{align*}
is a continuous embedding. Moreover, if $B \in \Delta_2 \cap \nabla_2$ is a Young function that more slowly than $A_{n,H}$ then the following embedding
\begin{align*}
W_0^{1,H}(\Omega) \hookrightarrow L^{B}(\Omega)
\end{align*}
is compact. The proof of the embedding~\eqref{embed-comp} is complete if we may apply the result above with function $H = H_p$ and $B = \varphi_q$ given as in~\eqref{def:H} and~\eqref{def:varphi-p} respectively. In fact, the most important point is to verify that $B$ is more slowly than $A_{n,H}$ in the following sense 
\begin{align}\label{A-B}
\lim_{t\to \infty} \frac{A_{n,H}(t)}{B(t)} = \infty.
\end{align}
Indeed, let us consider $G = \varphi_p$. Since $H(t) \ge  G(t)$ for every $t \ge 0$, one has $H^*(t) \le G^*(t)$. It leads to $\Phi_{n,H}(t) \le \Phi_{n,G}(t)$ and then $\Phi_{n,H}^{-1}(t) \ge \Phi_{n,G}^{-1}(t)$ for each $t \ge 0$. It follows that $A_{n,H}(t) \ge A_{n,G}(t)$ for all $t \ge 0$. On the other hand, see \cite[Example 1]{C96}, it is well known that
$$A_{n,G}(t) = Ct^{p^*} \log^{\frac{p^*}{p}}(e+t), \quad t \ge 0.$$ 
Therefore, we have
\begin{align}\notag
\lim_{t\to \infty} \frac{A_{n,G}(t)}{B(t)} =  C \lim_{t\to \infty} t^{p^*-q} \log^{\frac{p^*}{p}-1}(e+t) = \infty, \quad \mbox{ for all } q \in (1,p^*],
\end{align}
which guarantees~\eqref{A-B}. Hence, it allows us to conclude that $B$ is more slowly than $A_{n,H}$. We remark here that we may only infer that the embedding 
$$W_0^{1,H}(\Omega) \hookrightarrow L^{p^*}\log^{\frac{p^*}{p}}L(\Omega)$$ 
is continuous. Moreover, applying \cite[Theorem 1]{C96} again, we obtain inequality~\eqref{embed-ineq}. In particular, applying this embedding result with $q=p$, there exists a constant $C>0$ such that
\begin{align}\label{embed-ineq-2}
\|f\|_{L^p\log L(\Omega)} & \le C \|\nabla f\|_{\mathbb{L}^{p\log}(\Omega)}.
\end{align}
On the other hand, thanks to inequalities~\eqref{est:Lbb} in Lemma~\ref{lem:Lbb} and~\eqref{est:Llog} in Lemma~\ref{lem:Llog}, one gets that
\begin{align}\label{ssLlog}
\|f\|_{\mathbb{L}^{p\log}(\Omega)} & \le [f]_{\mathbb{L}^{p\log} (\Omega)}  = \left[\fint_{\Omega} |f(x)|^p + |f(x)|^p  \log \left(e+\frac{|f(x)|}{\|f\|_{p}}\right)dx\right]^{1/p} \notag \\
& \le 2^{\frac{1}{p}} \left[\fint_{\Omega} |f(x)|^p  \log \left(e+\frac{|f(x)|}{\|f\|_{p}}\right)dx\right]^{1/p} = 2^{\frac{1}{p}} [f]_{L^p\log L(\Omega)} \notag\\
& \le 8 \|f\|_{L^p\log L(\Omega)}.
\end{align}
Combining two estimates in~\eqref{embed-ineq-2} and~\eqref{ssLlog}, we have
\begin{align*}
\|f\|_{\mathbb{W}_0} = \|\nabla f\|_{\mathbb{L}^{p\log}(\Omega)} \le \|f\|_{\mathbb{W}} = \|f\|_{\mathbb{L}^{p\log}(\Omega)} + \|\nabla f\|_{\mathbb{L}^{p\log}(\Omega)} \le C \|f\|_{\mathbb{W}_0}.
\end{align*}
The proof is complete.
\end{proof}

This will not affect the rest, we conclude this section by briefly recalling some definitions that will be required later for the proof of our main results. 

\begin{definition}
Let $\mathbb{W}$ be a reflexive Banach space endowed with the norm $\|\cdot\|_\mathbb{W}$. We denote by $\mathbb{W}^*$ its topological dual space of $\mathbb{W}$ and by $\langle \cdot,\cdot \rangle$ the duality pairing between $\mathbb{W}$ and $\mathbb{W}^*$. The norm convergence is denoted by $\to$ and the weak convergence by $\rightharpoonup$. Consider a nonlinear map $A: \mathbb{W} \to \mathbb{W}^*$, then
\begin{enumerate}
\item[(i)] $A$ is called bounded if it maps bounded sets to bounded sets;
\item[(ii)] $A$ is said to be coercive if there holds
\begin{align*}
\lim_{\|u\|_\mathbb{W}}{\frac{\langle Au,u \rangle}{\|u\|_\mathbb{W}}} = +\infty;
\end{align*}
\item[(iii)] $A$ is called pseudo-monotone if $u_n\rightharpoonup u$ in $\mathbb{W}$ and $\limsup_{n\to +\infty}{\langle Au_n,u_n-u\rangle} \le 0$, it follows that
\begin{align*}
\langle Au,u-w \rangle \le \liminf_{n\to \infty}{\langle Au_n,u_n-w \rangle}, \quad \forall w \in \mathbb{W};
\end{align*}
\item[(iv)] $A$ is said to satisfy the $(S_+)$-property if $u_n \rightharpoonup u$ in $\mathbb{W}$ and $\limsup_{n\to +\infty}{\langle Au_n,u_n-u\rangle} \le 0$ imply $u_n \to u$ in $\mathbb{W}$.
\end{enumerate}
\end{definition}

\section{Statement of existence theorem}
\label{sec:main}

In this section, we shall present the main result, whose proof will be made up in the last section. As already alluded to in the introduction, we will focus our attention in this paper on the existence of at least a weak solution to~\eqref{main-eq}. 

The solution of problem~\eqref{main-eq} is understood in the weak sense. We say that a weak solution to~\eqref{main-eq} is a map $u \in \mathbb{W}_0$ that satisfying the weak formulation
\begin{align}\label{var-form}
\int_{\Omega} \mathcal{A}(\nabla u) \cdot \nabla v dx = \int_{\Omega} F(x,u,\nabla u) v dx,
\end{align} 
for every test function $v \in \mathbb{W}_0$, where $\mathbb{W}_0$ is the Orlicz-Zygmund-Sobolev space shown in Definition~\ref{def-OZS-space}.

Before going into details, let us briefly discuss the eigenvalue problem of the $p$-Laplace operator for $1<p<\infty$ on a bounded domain $\Omega$ with homogeneous Dirichlet boundary condition:
\begin{align}\label{p-Lap}
\begin{cases} -\mathrm{div}(|\nabla u|^{p-2}\nabla u) & = \ \lambda |u|^{p-2}u, \quad \mbox{ in } \Omega, \\ \hspace{1.5cm} u & = \ 0, \hspace{1.6cm} \mbox{ on } \partial \Omega. \end{cases}
\end{align}

There are several situations that we are interested in knowing about the first eigenvalue, say $\lambda_{1,p}$ of problem~\eqref{p-Lap}. The first eigenvalue has some well-known properties: positive, simple, and isolated, (see for e.g.~\cite{L06}). In addition, the following inequality
\begin{align}\label{eig-value}
\int_{\Omega}|u(x)|^p dx \le \frac{1}{\lambda_{1,p}} \int_{\Omega}|\nabla u(x)|^p dx , 
\end{align}
holds for every $u \in W_0^{1,p}(\Omega)$. We refer the reader to~\cite{Lin90, GP1987} for further references.

Once having all the preparations at hand, it allows us to state the main existence result of this paper.  

\begin{theorem}\label{theo-main}
Let  $n \ge 2$, $1<p<n$ and $\lambda_{1,p}$ be the first eigenvalue of problem~\eqref{p-Lap}. Provided the following parameters that satisfying 
\begin{align}\label{cond-mu}
\mu_1, \mu_2>0, \quad 0< \mu_3 < p \lambda_{1,p}, \quad \ 0 < \mu_4 < p, \quad \mbox{and } 1< q \le \frac{np}{n-p}.
\end{align}
Assume further that the nonlinearity on the right-hand side of~\eqref{main-eq}, $F: \Omega \times \mathbb{R} \times \mathbb{R}^n \to \mathbb{R}$, is a Carath\'eodory function and there exist two functions $g \in (L^{q}\log L(\Omega))^*$ and $h \in L^1(\Omega)$ satisfying the following conditions
\begin{align}\label{Asump-f-1}
\begin{cases}
|F(x,t,\xi)| \le g(x) + \mu_1 |t|^{q-1} \log (e+|t|) + \mu_2 |\xi|^{\frac{p(q-1)}{q}} \log \left(e+|\xi|^{\frac{q}{p}}\right),  \\ 
F(x,t,\xi) t \le h(x) + \mu_3 |t|^p  + \mu_4 |\xi|^p \log(e + |\xi|),  \end{cases}
\end{align}
for a.e. $x \in \Omega$, for all $t \in \mathbb{R}$ and $\xi \in \mathbb{R}^n$. Then, the equation~\eqref{main-eq} admits at least one weak solution $u \in \mathbb{W}_0$.
\end{theorem}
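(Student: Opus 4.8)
The plan is to apply the surjectivity theorem for pseudo-monotone operators (as in~\cite{CLM07}) to the energy operator associated with~\eqref{main-eq}. Define $\mathcal{T}: \mathbb{W}_0 \to \mathbb{W}_0^*$ by splitting it as $\mathcal{T} = \mathcal{T}_1 - \mathcal{T}_2$, where
\begin{align*}
\langle \mathcal{T}_1 u, v \rangle = \int_{\Omega} \mathcal{A}(\nabla u) \cdot \nabla v\, dx, \qquad \langle \mathcal{T}_2 u, v \rangle = \int_{\Omega} F(x,u,\nabla u)\, v\, dx,
\end{align*}
for $u, v \in \mathbb{W}_0$. The first task is to check that both maps are well-defined and bounded from $\mathbb{W}_0$ into $\mathbb{W}_0^*$: for $\mathcal{T}_1$ this follows from the fact that $|\mathcal{A}(\xi)| \lesssim |\xi|^{p-1}\log(e+|\xi|) = H_p(|\xi|)/|\xi|$ together with Lemma~\ref{lem-Young}(iv)--(v) and the H\"older inequality~\eqref{Hold-ineq} in the Orlicz spaces $L^{H_p}$ and $(L^{H_p})^*$; for $\mathcal{T}_2$ one uses the first growth bound in~\eqref{Asump-f-1}, Remark~\ref{rmk-1} to rewrite the $t$- and $\xi$-terms as elements of $L^{q'}(\log L)$-type and $L^{H_p'}$-type dual spaces, and the compact embedding $\mathbb{W}_0 \hookrightarrow L^q\log L(\Omega)$ from Lemma~\ref{lem:embed} to land in $(L^q\log L(\Omega))^* \subset \mathbb{W}_0^*$.

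Next I would establish coercivity of $\mathcal{T}$. Using the structure of $\mathcal{A}$, one has $\mathcal{A}(\nabla u)\cdot\nabla u = |\nabla u|^p + |\nabla u|^p\log(e+|\nabla u|) = H_p(|\nabla u|)$, so $\langle \mathcal{T}_1 u, u\rangle = \int_\Omega H_p(|\nabla u|)\,dx$, which controls $\|u\|_{\mathbb{W}_0}$ from below via the modular--norm equivalence in Lemma~\ref{lem:Lbb}. For the reaction term, the second condition in~\eqref{Asump-f-1} gives
\begin{align*}
\langle \mathcal{T}_2 u, u\rangle = \int_\Omega F(x,u,\nabla u)\,u\,dx \le \|h\|_1 + \mu_3 \int_\Omega |u|^p\,dx + \mu_4 \int_\Omega |\nabla u|^p \log(e+|\nabla u|)\,dx.
\end{align*}
The Poincar\'e-type inequality~\eqref{eig-value} with the first eigenvalue of the $p$-Laplacian turns $\mu_3\int_\Omega|u|^p$ into $(\mu_3/\lambda_{1,p})\int_\Omega|\nabla u|^p$, and since $\mu_3 < \lambda_{1,p}$ and $\mu_4 < 1$ we can absorb both $\mu_3$- and $\mu_4$-terms into $\langle\mathcal{T}_1 u,u\rangle$ with a strictly positive remaining fraction of $\int_\Omega H_p(|\nabla u|)\,dx$. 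Hence $\langle\mathcal{T}u,u\rangle \ge c\int_\Omega H_p(|\nabla u|)\,dx - \|h\|_1$, and coercivity follows from the fact that the $H_p$-modular tends to infinity when $\|u\|_{\mathbb{W}_0}\to\infty$ (using Lemma~\ref{lem-Young}(iii) applied to $H_p$).

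The remaining and most delicate step is pseudo-monotonicity of $\mathcal{T}$. Here $\mathcal{T}_1$ is monotone (indeed satisfies the $(S_+)$-property, since $\xi\mapsto\mathcal{A}(\xi)$ is strictly monotone as the gradient of the strictly convex $H_p(|\xi|)$), so it is pseudo-monotone; the issue is that $\mathcal{T}_2$ need not be monotone, so we argue directly from the definition. Take $u_n \rightharpoonup u$ in $\mathbb{W}_0$ with $\limsup_n \langle\mathcal{T}u_n, u_n - u\rangle \le 0$. By the compact embedding $\mathbb{W}_0 \hookrightarrow L^q\log L(\Omega)$ we get $u_n \to u$ strongly in $L^q\log L(\Omega)$ (hence a.e. along a subsequence), and using the growth bound~\eqref{Asump-f-1} one shows $\langle \mathcal{T}_2 u_n, u_n - u\rangle \to 0$. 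This forces $\limsup_n \langle \mathcal{T}_1 u_n, u_n - u\rangle \le 0$, and the $(S_+)$-property of $\mathcal{T}_1$ gives $u_n \to u$ strongly in $\mathbb{W}_0$; continuity of $\mathcal{T}_1$ and of $F$ (via Nemytskii-type arguments and dominated convergence, with the dominating functions coming from~\eqref{Asump-f-1} and strong convergence of $\nabla u_n$) then yield $\mathcal{T}u_n \to \mathcal{T}u$ in $\mathbb{W}_0^*$, which is more than enough for the pseudo-monotone inequality. With $\mathcal{T}$ bounded, coercive, and pseudo-monotone on the reflexive Banach space $\mathbb{W}_0$, the surjectivity theorem of~\cite{CLM07} yields $u \in \mathbb{W}_0$ with $\mathcal{T}u = 0$, i.e. a weak solution of~\eqref{main-eq} in the sense of~\eqref{var-form}. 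I expect the verification that $\langle\mathcal{T}_2 u_n, u_n-u\rangle \to 0$ — matching each term in the growth bound to a dual Orlicz--Zygmund space in which the embedding is genuinely compact — to be the main obstacle, since it requires care with the non-power-type log weights and the duality~\eqref{Hold-ineq}.
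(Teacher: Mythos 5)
Your overall architecture coincides with the paper's: the same splitting $\mathcal{G}=\mathcal{L}-\mathcal{N}_F$, boundedness via Orlicz--Young/H\"older inequalities, coercivity via the eigenvalue inequality~\eqref{eig-value} and absorption of the $\mu_3$- and $\mu_4$-terms, pseudo-monotonicity reduced to the $(S_+)$-property of the principal part through the compact embedding of Lemma~\ref{lem:embed}, and finally the surjectivity theorem of~\cite{CLM07}. Your coercivity argument via Lemma~\ref{lem-Young}(iii) applied to the $H_p$-modular is a legitimate (slightly cleaner) variant of the paper's handling of the normalization inside the logarithm of the Luxemburg norm.

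There is, however, a genuine gap where you dispose of the hardest step in one clause: you claim the $(S_+)$-property of $\mathcal{T}_1$ holds ``since $\xi\mapsto\mathcal{A}(\xi)$ is strictly monotone as the gradient of the strictly convex $H_p(|\xi|)$.'' Strict monotonicity does not imply $(S_+)$ (a compact, injective, positive linear operator is strictly monotone yet fails $(S_+)$). What is actually needed, and what the paper spends most of its effort on, is (i) the \emph{quantitative} ellipticity estimate~\eqref{cond:A2} from~\cite{BCM16}, which bounds $(\mathcal{A}(\xi_1)-\mathcal{A}(\xi_2))\cdot(\xi_1-\xi_2)$ from below by $|V_p(\xi_1)-V_p(\xi_2)|^2+|V_{p\log}(\xi_1)-V_{p\log}(\xi_2)|^2$, and (ii) a nontrivial passage from
\begin{align*}
\int_{\Omega}\bigl(\mathcal{A}(\nabla u_k)-\mathcal{A}(\nabla u)\bigr)\cdot(\nabla u_k-\nabla u)\,dx\to 0
\end{align*}
to $\|u_k-u\|_{\mathbb{W}_0}\to 0$. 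Step (ii) requires the comparison estimates~\eqref{est-3-2}--\eqref{est-3-3} (with separate treatment of $1<p<2$ via Young's inequality) to control the modular $\int|\nabla u_k-\nabla u|^p\bigl(1+\log(e+|\nabla u_k-\nabla u|)\bigr)dx$, and then a further argument (estimates~\eqref{est-3-6}--\eqref{est-3-8}) to replace $\log(e+|\nabla u_k-\nabla u|)$ by $\log\bigl(e+|\nabla u_k-\nabla u|/\|\nabla u_k-\nabla u\|_p\bigr)$, which is what the Luxemburg norm of $\mathbb{L}^{p\log}(\Omega)$ actually sees. Without this, identifying which term is the ``main obstacle'' is misplaced: the convergence $\langle\mathcal{T}_2 u_n,u_n-u\rangle\to 0$ that you single out follows rather directly from the growth bound, Remark~\ref{rmk-1}, and H\"older's inequality~\eqref{Hold-ineq}, whereas the $(S_+)$-property is where the logarithmic structure genuinely bites.
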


\begin{remark}
Theorem~\ref{theo-main} can be extended in a natural way in the study of a class of unbalanced double-phase problems with variable exponent, i.e. when $p \equiv p(\cdot)$, especially when we deal with obstacle models (models that combine a double-phase operator along with an obstacle constraint). As far as we know, models with variable exponents have been the object of intensive interest in the applied sciences. For instance, non-Newtonian fluids that change their viscosity when the electromagnetic field is present, image segmentation, etc. Generalizing the existence result to this case therefore is significantly more involved, and will be addressed in the future.

Also, similar to what has been done with quasilinear elliptic equations/systems driven by double-phase operators involving two distinct power hardening exponents $p$ and $q$, is of the type
\begin{align*}
-\mathrm{div}(|\nabla u|^{p-2}\nabla u +a(x)|\nabla u|^{q-2}\nabla u) = F(x,u,\nabla u), \quad \text{in} \ \ \Omega,
\end{align*}
that treated recently by many authors~\cite{EMW2022, GW19, LD18, MW2019, VW2022}, etc, it would be very interesting to deal with the existence of solutions for problem~\eqref{main-eq} or the Euler-Lagrange equation of the functional in~\eqref{eq:G} under different types of boundary conditions (Dirichlet, Neumann, and mixed).
\end{remark}

\begin{remark}
From a mathematical point of view, the authors not only devote special attention to the question of clarifying the existence of the weak solutions/minimizers to certain functionals but also the regularity theory has been paid a lot of attention recently. The regularity study of non-uniformly elliptic equations with $(p,q)$-growth has been widely investigated. For example, we mention some famous results in regularity theory as in~\cite{MS1999, AM01, AM05, BCM15, BCM16, BCM18, BM2020, CM2015, CM2015_2, FM2019, FM2020, FM2020_2, TN21, TNPD22,  BO2017, Marcellini1989, Marcellini1991} and references therein. Since the regularity for borderline double-phase problems is not yet completely settled, it is natural to establish regularity properties for minimizers/solutions to variational integrals and related equations. 
\end{remark}

\begin{remark}
It is to be noticed that in the context of general functionals with double-phase and related equations whose integrand grows almost linearly with respect to the gradient, the existence results are a much more significant challenge. In a notable work by Fillipis and Mingione~\cite{FM2023}, authors dealt with an initial regularity result for the minimizers of functionals of ``nearly-linear'' type
\begin{align*}
\omega \mapsto \mathcal{I}(\omega,\Omega):=\int_\Omega{\left(a(x)|\nabla u|\mathrm{log}(1+|\nabla u|)+b(x)|\nabla u|^q\right)dx}.
\end{align*}
This model is the combination between the classical nearly linear growth and $q$-power growth. Many open questions on both the existence and regularity of such problems remain ahead of us. We expect that the same result would be extended to solutions to nonlinear problems in such a borderline case.
\end{remark}

\section{Proof of main result}
\label{sec:proof}

This section sets forth the proof of Theorem~\ref{theo-main} with all the preceding results at hand. Further, the proof of our theorem is based on the subjectivity theorem for pseudo-monotone operators as follows. To explore more, we refer to~\cite[Theorem 2.99]{CLM07}.

\begin{theorem}\label{theo-CLM}
Let $\mathbb{W}$ be a real reflexive Banach space. Assume that $\mathcal{G}: \mathbb{W} \to \mathbb{W}^*$ is a bounded, pseudo-monotone and coercive operator. Then the equation $\mathcal{G}(u) = 0$ admits at least one solution in $\mathbb{W}$.
\end{theorem}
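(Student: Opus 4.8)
\textbf{Proof proposal for Theorem~\ref{theo-CLM}.} The plan is to prove this abstract surjectivity result by the Galerkin method: reduce the solvability of $\mathcal{G}(u)=0$ to a family of finite-dimensional equations solved via Brouwer's fixed point theorem, obtain a uniform bound from coercivity, and then pass to the limit using pseudo-monotonicity. Throughout I would assume $\mathbb{W}$ is separable (which is the case for the space $\mathbb{W}_0$ relevant to Theorem~\ref{theo-main}; the general reflexive case follows by running the same scheme along the family of all finite-dimensional subspaces directed by inclusion). Thus fix an increasing sequence of finite-dimensional subspaces $V_1\subseteq V_2\subseteq\cdots$ whose union $D:=\bigcup_m V_m$ is dense in $\mathbb{W}$, and for the inclusion $j_m:V_m\hookrightarrow\mathbb{W}$ set $\mathcal{G}_m:=j_m^{*}\circ\mathcal{G}\circ j_m:V_m\to V_m^{*}$.

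First I would record the preliminary fact that a \emph{bounded} pseudo-monotone operator is demicontinuous: if $u_n\to u$ in $\mathbb{W}$, then $\{\mathcal{G}u_n\}$ is bounded in $\mathbb{W}^*$, and for any weakly convergent subsequence $\mathcal{G}u_{n_k}\rightharpoonup\xi$ one has $\langle\mathcal{G}u_{n_k},u_{n_k}-u\rangle\to 0$, so pseudo-monotonicity gives $\langle\mathcal{G}u,u-w\rangle\le\langle\xi,u-w\rangle$ for every $w$, whence $\xi=\mathcal{G}u$; by uniqueness of subsequential limits, $\mathcal{G}u_n\rightharpoonup\mathcal{G}u$. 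In particular each $\mathcal{G}_m$ is continuous on the finite-dimensional space $V_m$. By coercivity there exists $R>0$ with $\langle\mathcal{G}u,u\rangle>0$ whenever $\|u\|_{\mathbb{W}}=R$; identifying $V_m$ with a Euclidean space, the ``acute angle'' corollary of Brouwer's theorem applied to $\mathcal{G}_m$ on the ball of radius $R$ produces $u_m\in V_m$ with $\|u_m\|_{\mathbb{W}}\le R$ and
\begin{align}\label{galerkin}
\langle\mathcal{G}u_m,v\rangle=0\qquad\text{for all }v\in V_m .
\end{align}

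Since $\|u_m\|_{\mathbb{W}}\le R$ for all $m$, reflexivity yields a subsequence (not relabelled) with $u_m\rightharpoonup u$ in $\mathbb{W}$, and boundedness of $\mathcal{G}$ gives $\|\mathcal{G}u_m\|_{\mathbb{W}^*}\le C$. The key step is the inequality $\limsup_{m\to\infty}\langle\mathcal{G}u_m,u_m-u\rangle\le 0$: fixing $w_0\in D$, say $w_0\in V_k$, relation~\eqref{galerkin} gives $\langle\mathcal{G}u_m,u_m\rangle=0$ and $\langle\mathcal{G}u_m,w_0\rangle=0$ for all $m\ge k$, hence
\begin{align*}
\langle\mathcal{G}u_m,u_m-u\rangle=\langle\mathcal{G}u_m,w_0-u\rangle\le C\,\|w_0-u\|_{\mathbb{W}}\qquad(m\ge k),
\end{align*}
and letting $\|w_0-u\|_{\mathbb{W}}\to 0$ along the dense set $D$ proves the claim. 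Pseudo-monotonicity then gives, for every $w\in\mathbb{W}$, $\langle\mathcal{G}u,u-w\rangle\le\liminf_{m\to\infty}\langle\mathcal{G}u_m,u_m-w\rangle$, and for $w\in D$ the right-hand side is $0$ by~\eqref{galerkin}; since $w\mapsto\langle\mathcal{G}u,u-w\rangle$ is continuous and $D$ dense, $\langle\mathcal{G}u,u-w\rangle\le 0$ for all $w\in\mathbb{W}$. Taking $w=u\pm z$ for arbitrary $z\in\mathbb{W}$ forces $\langle\mathcal{G}u,z\rangle=0$, i.e. $\mathcal{G}(u)=0$.

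The main obstacle is precisely this limit passage: extracting the inequality $\limsup_m\langle\mathcal{G}u_m,u_m-u\rangle\le 0$ that triggers pseudo-monotonicity, which must be squeezed out of the Galerkin identities~\eqref{galerkin} and the density of $D$, bearing in mind that the weak limit $u$ need not lie in any $V_m$. Secondary technical points are the demicontinuity of the finite-dimensional restrictions $\mathcal{G}_m$ (needed for Brouwer) and, in the non-separable setting, replacing the sequence $\{V_m\}$ by the net of all finite-dimensional subspaces; coercivity enters only through the uniform bound $\|u_m\|_{\mathbb{W}}\le R$ and boundedness only through $\|\mathcal{G}u_m\|_{\mathbb{W}^*}\le C$, so all the genuine work lies in the monotonicity-type argument.
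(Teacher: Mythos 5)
The paper does not supply a proof of Theorem~\ref{theo-CLM}: it is imported verbatim from \cite[Theorem 2.99]{CLM07} and used as a black box, so there is nothing internal to compare against. Judged on its own, your Galerkin proof is correct and is precisely the classical argument behind this surjectivity theorem (the one in \cite{CLM07}, going back to Br\'ezis and Browder). The preliminary observation that a bounded pseudo-monotone operator is demicontinuous is exactly what makes the finite-dimensional restrictions $\mathcal{G}_m$ continuous so that the Brouwer acute-angle lemma applies; coercivity gives the uniform bound $\|u_m\|_{\mathbb W}\le R$; the identity $\langle\mathcal{G}u_m,u_m-u\rangle=\langle\mathcal{G}u_m,w_0-u\rangle$ for $w_0\in V_k$, $m\ge k$, combined with $\|\mathcal{G}u_m\|_{\mathbb W^*}\le C$ and density of $D$, cleanly delivers $\limsup_m\langle\mathcal{G}u_m,u_m-u\rangle\le 0$; and the final passage to $\mathcal{G}(u)=0$ via pseudo-monotonicity and the $w=u\pm z$ trick is standard and correct. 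The only point I would spell out is the finite-dimensional step: after fixing a basis of $V_m$, the pairing $\langle\mathcal{G}_m(\cdot),\cdot\rangle$ becomes the Euclidean scalar product of coordinate vectors, and the set $\{v\in V_m:\|v\|_{\mathbb W}\le R\}$ is a compact convex symmetric body with $0$ in its interior, which is the form of the acute-angle lemma one actually invokes. Your remark that the non-separable case runs along the net of all finite-dimensional subspaces is also the right way to remove the separability assumption.
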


Although the key idea of the proof follows a similar path to the proof for double-phase problems with $(p,q)$-growth in~\cite{GW19}, our problem~\eqref{main-eq} (known as the borderline case of a double-phase problem) is difficult to handle by itself nature. Due to the logarithmic growth, the technique is not the same as in~\cite{GW19}.  As a natural way, from~\eqref{var-form}, the operator $\mathcal{G}$ mentioned in Theorem~\ref{theo-CLM} will be defined by
\begin{align}\label{def-G}
\langle \mathcal{G}(u),v \rangle = \int_{\Omega} \left[ \mathcal{A}(\nabla u) \cdot \nabla v - F(x,u,\nabla u) v \right] dx, \quad v \in \mathbb{W}_0.
\end{align}

The first difficulty comes from the well-definedness of the operator $\mathcal{G}$, which is obtained by the construction of compact embedding result from $\mathbb{W}_0$ into another reflexive Banach space. Of course, if we embed $\mathbb{W}_0$ into Lebesgue spaces, then assumptions on the convection term $F$ may be not optimal (see assumption~\eqref{Asump-f-1}). To obtain a better result corresponding to the condition of $F$, we connect to a compact embedding in Orlicz-Zygmund spaces $L^q\log L(\Omega)$, which is an optimal embedding result as in our knowledge until now. This embedding result can be obtained from a sharp theorem proved by Cianchi in~\cite{C96}.

Furthermore, when we work with the Luxemburg norm of Orlicz-Zygmund space $L^q\log L(\Omega)$, we often consider another integral term that is related to the modular function. Therefore, the proof of coercive property of $\mathcal{G}$ defined in~\eqref{def-G} becomes more difficult with this type of modular function.

In order to obtain the pseudo-monotone property of $\mathcal{G}$, one needs to show the $(S_+)$ condition of the operator related to the term on the left-hand side of~\eqref{var-form} that involves the logarithmic perturbation. In this paper, we specifically use a very different method to the one in~\cite{LD18} and~\cite{GW19}.

At this stage, let us consider the following double-phase operator $\mathcal{L}: \mathbb{W}_0 \to \mathbb{W}_0^*$ defined by
\begin{align}\label{def:LW}
\langle \mathcal{L}(u),v\rangle &= \int_{\Omega} \mathcal{A}(\nabla u) \cdot \nabla v dx, 
\end{align}
for every $u,v \in \mathbb{W}_0$. \\

The next lemma ensures some important properties of the operator $\mathcal{L}$ that play a crucial role in the main proof of existence. 

\begin{lemma}\label{lem:properties-L}
Operator $\mathcal{L}$ defined in~\eqref{def:LW} is continuous, bounded, monotone and it satisfies $(S)_+$ condition.
\end{lemma}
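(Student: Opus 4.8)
\textbf{Plan of proof for Lemma~\ref{lem:properties-L}.}

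The approach is to exploit the variational structure of $\mathcal{A}$, namely that $\mathcal{A}(\xi) = \nabla_\xi \Phi(|\xi|)$ for a strictly convex potential with $\Phi'(t)/t \sim t^{p-2}(1 + \log(e+t))$, and to transfer properties of the map $t \mapsto |t|^{p-2}t(1+\log(e+t))$ to the operator level through the machinery of $\Delta_2 \cap \nabla_2$ Young functions developed in Lemma~\ref{lem-Young}. First I would record the pointwise monotonicity inequality: since each of $\xi \mapsto |\xi|^{p-2}\xi$ and $\xi \mapsto |\xi|^{p-2}\log(e+|\xi|)\xi$ is the gradient of a convex radial function, one has $(\mathcal{A}(\xi) - \mathcal{A}(\eta))\cdot(\xi - \eta) \ge 0$ for all $\xi,\eta \in \mathbb{R}^n$, with equality only if $\xi = \eta$ (strict monotonicity). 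Integrating over $\Omega$ and using the norm~\eqref{norm-W0} on $\mathbb{W}_0$ immediately gives $\langle \mathcal{L}(u) - \mathcal{L}(v), u-v\rangle \ge 0$, i.e. monotonicity.

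\textbf{Boundedness.} For this I would estimate $|\mathcal{A}(\xi)| \le |\xi|^{p-1}(1 + \log(e+|\xi|)) \le C\, H_p(|\xi|)/|\xi|$ with $H_p$ as in~\eqref{def:H}, and then apply the H\"older inequality in Orlicz form~\eqref{Hold-ineq} together with the fact that $\xi \mapsto H_p(|\xi|)/|\xi|$ maps $L^{H_p}$-type bounds to $L^{H_p^*}$-type bounds via~\eqref{G-star}. Concretely, for $u$ with $\|\nabla u\|_{\mathbb{L}^{p\log}(\Omega)} \le R$ one controls $\|\mathcal{A}(\nabla u)\|_{L^{H_p^*}(\Omega)}$ by a constant depending on $R$, so that $|\langle \mathcal{L}(u),v\rangle| \le C(R)\|\nabla v\|_{\mathbb{L}^{p\log}(\Omega)}$, giving $\|\mathcal{L}(u)\|_{\mathbb{W}_0^*} \le C(R)$.

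\textbf{Continuity and the $(S_+)$ property.} Continuity I would obtain from the Nemytskii/Krasnoselskii theorem: $\mathcal{A}$ is continuous with the growth just stated, so if $u_n \to u$ in $\mathbb{W}_0$ then (along a subsequence) $\nabla u_n \to \nabla u$ a.e. and is dominated, whence $\mathcal{A}(\nabla u_n) \to \mathcal{A}(\nabla u)$ in $L^{H_p^*}(\Omega)$ by Vitali's theorem ($H_p^* \in \Delta_2$ prevents concentration), and a subsequence-principle argument upgrades this to full-sequence convergence. For the $(S_+)$ property — the heart of the lemma — suppose $u_n \rightharpoonup u$ in $\mathbb{W}_0$ and $\limsup_n \langle \mathcal{L}(u_n), u_n - u\rangle \le 0$. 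Writing
\begin{align*}
\langle \mathcal{L}(u_n) - \mathcal{L}(u), u_n - u\rangle = \int_\Omega \big(\mathcal{A}(\nabla u_n) - \mathcal{A}(\nabla u)\big)\cdot(\nabla u_n - \nabla u)\, dx \ge 0,
\end{align*}
and noting $\langle \mathcal{L}(u), u_n - u\rangle \to 0$ by weak convergence, one gets that the nonnegative integrand $D_n(x) := (\mathcal{A}(\nabla u_n) - \mathcal{A}(\nabla u))\cdot(\nabla u_n - \nabla u)$ has $\int_\Omega D_n \to 0$, so $D_n \to 0$ in $L^1$ and a.e. along a subsequence. The key quantitative step is a lower bound of the form $D_n(x) \ge c\, H_p(|\nabla u_n - \nabla u|(x))$ valid at least when $|\nabla u_n(x)| + |\nabla u(x)|$ is controlled, or more robustly an inequality $D_n \gtrsim |\nabla u_n - \nabla u|^p + |\nabla u_n - \nabla u|^p\log(e + |\nabla u_n - \nabla u|)$ obtained by splitting into the regions $\{|\nabla u_n| \le 2|\nabla u|\}$ and its complement and using convexity of $H_p$; this forces $\nabla u_n \to \nabla u$ a.e., and then a Vitali-type argument using $\int_\Omega D_n \to 0$ as a uniform-integrability surrogate yields $\int_\Omega H_p(|\nabla u_n - \nabla u|)\,dx \to 0$, equivalently $\|\nabla u_n - \nabla u\|_{\mathbb{L}^{p\log}(\Omega)} \to 0$ by Lemma~\ref{lem:Lbb}, i.e. $u_n \to u$ in $\mathbb{W}_0$.

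\textbf{Main obstacle.} The delicate point is precisely the pointwise coercivity estimate for $D_n$ in the presence of the logarithmic factor: the standard $p$-Laplacian inequality $(|\xi|^{p-2}\xi - |\eta|^{p-2}\eta)\cdot(\xi-\eta) \gtrsim |\xi-\eta|^p$ (for $p \ge 2$; $\gtrsim |\xi-\eta|^2/(|\xi|+|\eta|)^{2-p}$ for $p<2$) must be supplemented by a matching lower bound for the logarithmic term that reproduces a full $H_p(|\xi-\eta|)$ on the right, uniformly enough to run Vitali. I expect to handle this by a case analysis on the relative size of $|\xi|,|\eta|$ and, for $1<p<2$, by combining the degenerate $p$-estimate with the monotonicity of $t\mapsto \log(e+t)$ and the doubling property~\eqref{G-star} of $H_p$; alternatively one can avoid a fully explicit constant by arguing that $D_n \to 0$ in $L^1$ plus strict monotonicity already forces $\nabla u_n \to \nabla u$ in measure, and then closing with uniform integrability of $H_p(|\nabla u_n|)$ inherited from the bounded-energy hypothesis together with $H_p \in \Delta_2$.
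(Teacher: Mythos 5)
Your overall architecture (monotonicity from convexity/strict monotonicity of $\mathcal{A}$, boundedness from the growth $|\mathcal{A}(\xi)|\lesssim H_p(|\xi|)/|\xi|$ plus Orlicz duality, and the $(S_+)$ property from $\int_\Omega D_n\,dx\to 0$ with $D_n=(\mathcal{A}(\nabla u_n)-\mathcal{A}(\nabla u))\cdot(\nabla u_n-\nabla u)\ge 0$) is the same as the paper's. But the step you explicitly defer is the entire content of the lemma, and the form in which you state it is not the one that works. A pointwise bound $D_n\ge c\,H_p(|\nabla u_n-\nabla u|)$ is false for $1<p<2$ (take $|\xi|,|\eta|$ large with $|\xi-\eta|$ of order one: the left side degenerates like $(|\xi|+|\eta|)^{p-2}$), and even your weighted version does not by itself reproduce $H_p(|\xi-\eta|)$. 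What the paper actually uses is the inequality \eqref{cond:A2}/\eqref{est-3-1} from \cite[Section 3.2]{BCM16}, namely $D_n\gtrsim |\nabla u_n-\nabla u|^2(|\nabla u_n|^2+|\nabla u|^2)^{\frac{p-2}{2}}\bigl(1+\log(e+|\nabla u_n|+|\nabla u|)\bigr)$, followed by a Young-type interpolation (\eqref{est-3-2}--\eqref{est-3-4}, cf.\ \cite[Lemma 3.2]{TNPD22}) that yields only
\begin{align*}
\fint_\Omega H_p(|\nabla u_n-\nabla u|)\,dx \le \varepsilon\fint_\Omega H_p(|\nabla u|)\,dx + C_\varepsilon\fint_\Omega D_n\,dx ,
\end{align*}
with an $\varepsilon$-error that must then be absorbed using the boundedness of the weakly convergent sequence and a careful choice of $\varepsilon=\varepsilon(\delta)$. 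If you carry out your "case analysis on the relative size of $|\xi|,|\eta|$" to this integrated, $\varepsilon$-perturbed form, your primary route closes and coincides with the paper's; there is also a final (minor, but nonzero) step the paper does not skip, namely passing from smallness of the modular $\fint H_p(|\nabla u_n-\nabla u|)$ to smallness of the Luxemburg norm, which involves the renormalized logarithm $\log(e+|f|/\|f\|_p)$ in Lemma~\ref{lem:Lbb}.

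Your fallback argument, however, has a genuine gap: "uniform integrability of $H_p(|\nabla u_n|)$ inherited from the bounded-energy hypothesis" is not available. Weak convergence in $\mathbb{W}_0$ gives only $\sup_n\int_\Omega H_p(|\nabla u_n|)\,dx<\infty$, and a bounded sequence in $L^1$ is not equi-integrable ($g_n=n\chi_{[0,1/n]}$), so Vitali's theorem cannot be invoked to upgrade $\nabla u_n\to\nabla u$ in measure to $\int_\Omega H_p(|\nabla u_n-\nabla u|)\,dx\to 0$. (This is exactly the reason the paper avoids any a.e./in-measure convergence argument for $(S_+)$ and works instead with the quantitative absorption above; equi-integrability of $\{H_p(|\nabla u_n|)\}$ would have to be extracted from $\int_\Omega D_n\to 0$ itself, which is essentially the same work as the main route.) A smaller remark on boundedness: your Orlicz--H\"older route is fine and arguably cleaner than the paper's normalization argument, since $\mathcal{L}$ is not homogeneous; just make sure the bound you produce is of the form $\|\mathcal{L}(u)\|_{\mathbb{W}_0^*}\le C(R)$ on balls, which is all that boundedness requires.
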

\begin{proof}
It is easy to see that $\mathcal{L}$ is continuous. Let us now show that $\mathcal{L}$ is bounded. Indeed, let us set $\tilde{u} = u/\|u\|_{\mathbb{W}_0}$ and $\tilde{v} = v/\|v\|_{\mathbb{W}_0}$. One has
\begin{align}\label{est-11}
\left|\left\langle \mathcal{L}\left(\tilde{u}\right), \tilde{v}\right\rangle\right| & = \left|p\int_{\Omega} \left[ \left|\nabla \tilde{u}\right|^{p-2} \nabla \tilde{u} \cdot \nabla \tilde{v} + \left|\nabla \tilde{u}\right|^{p-2} \log \left(e+\left|\nabla \tilde{u}\right|\right) \nabla \tilde{u} \cdot \nabla \tilde{v} \right] dx\right. \notag \\
& \qquad \qquad + \left. \int_{\Omega} \frac{\left|\nabla \tilde{u}\right|^{p-1}}{e+|\nabla \tilde{u}|} \nabla \tilde{u} \cdot \nabla \tilde{v} dx \right| \notag \\
& \le (p+1) \left[\int_{\Omega} \left|\nabla \tilde{u}\right|^{p-1} \left|\nabla \tilde{v}\right| dx + \int_{\Omega} \left|\nabla \tilde{u}\right|^{p-1} \log \left(e+\left|\nabla \tilde{u}\right|\right) \left|\nabla \tilde{v}\right| dx\right].
\end{align}
Young's inequality gives us
\begin{align}\label{est-12}
\int_{\Omega} \left|\nabla \tilde{u}\right|^{p} \left|\nabla \tilde{v}\right| dx \le C \int_{\Omega} \left|\nabla \tilde{u}\right|^{p} dx + C \int_{\Omega} \left|\nabla \tilde{v}\right|^{p} dx.
\end{align}
Thanks to a modified form of Young's inequality in~\eqref{Young-ineq}, there exists a constant $C>0$ such that
\begin{align}\label{est-13}
& \int_{\Omega} \left|\nabla \tilde{u}\right|^{p-1} \log \left(e+\left|\nabla \tilde{u}\right|\right) \left|\nabla \tilde{v}\right| dx \le C\int_{\Omega} \left|\nabla \tilde{u}\right|^{p} \log \left(e+\left|\nabla \tilde{u}\right|\right) dx \notag\\
& \qquad \qquad + C\int_{\Omega} \left|\nabla \tilde{v}\right|^{p} \log \left(e+\left|\nabla \tilde{v}\right|\right) dx.
\end{align}
Substituting~\eqref{est-13} and~\eqref{est-12} into~\eqref{est-11}, one obtains that
\begin{align}\label{est-14}
\left|\left\langle \mathcal{L}\left(\tilde{u}\right), \tilde{v} \right\rangle\right| & \le C \int_{\Omega} \left|\nabla \tilde{u}\right|^{p} + \left|\nabla \tilde{u}\right|^{p} \log \left(e+\left|\nabla \tilde{u}\right|\right) dx \notag\\
& \qquad \qquad + C \int_{\Omega} \left|\nabla \tilde{v}\right|^{p} + \left|\nabla \tilde{v}\right|^{p} \log \left(e+\left|\nabla \tilde{v}\right|\right) dx \notag \\
& \le C \left(\|\tilde{u}\|_{\mathbb{W}_0} + \|\tilde{v}\|_{\mathbb{W}_0}\right) = C.
\end{align}
Due to~\eqref{est-14}, it allows us to conclude that 
\begin{align*}
\|\mathcal{L}(u)\|_{\mathbb{W}_0^*} = \sup_{\|v\|_{\mathbb{W}_0}\le 1} |\langle \mathcal{L}(u),v\rangle| \le C \|u\|_{\mathbb{W}_0}.
\end{align*}

The fact that the operator $\mathcal{L}$ is monotone comes from the ellipticity condition of the vector field $\mathcal{A}$. Indeed, let us define two auxiliary vector fields $V_p, V_{p\log}: \mathbb{R}^n \to \mathbb{R}^n$ by
\begin{align}\notag
V_p(\xi) := |\xi|^{\frac{p-2}{2}}\xi, \ V_{p\log}(\xi) := \left(p|\xi|^{p-2}\log (e+|\xi|) + \frac{|\xi|^{p-1}}{e+|\xi|}\right)^{\frac{1}{2}}\xi,
\end{align}
whenever $\xi \in \mathbb{R}^n$. These operators will be very useful and used mainly in our proofs later. With their aid, there exists a constant $C>0$ such that
\begin{align}\label{cond:A2}
\left( \mathcal{A}(\xi_1) - \mathcal{A}(\xi_2) \right) \cdot \left(\xi_1 - \xi_2 \right) \ge C \left[|V_p(\xi_1) - V_p(\xi_2)|^2 + |V_{p\log}(\xi_1) - V_{p\log}(\xi_2)|^2\right], 
\end{align}
for every $\xi_1, \xi_2 \in \mathbb{R}^n$. We refer the reader to \cite[Section 3.2]{BCM16} for details concerning the proof of~\eqref{cond:A2}. Then, it is readily verified that
\begin{align*}
\langle \mathcal{L}(u_1) - \mathcal{L}(u_2), u_1 - u_2 \rangle & = \int_{\Omega} \left( \mathcal{A}(\nabla u_1) - \mathcal{A}(\nabla u_2) \right) \cdot \left(\nabla u_1 - \nabla u_2 \right) dx \\
& \ge C \int_{\Omega} \left[|V_p(\nabla u_1) - V_p(\nabla u_2)|^2 + |V_{p\log}(\nabla u_1) - V_{p\log}(\nabla u_2)|^2\right],
\end{align*}
which implies that $\mathcal{L}$ is monotone.\\

In a next step, we prove that $\mathcal{L}$ satisfies the $(S)_+$ condition. Assume that $u_k \rightharpoonup u$ in $\mathbb{W}_0$ and $\limsup_{k \to \infty} \langle \mathcal{L}(u_k), u_k - u \rangle \le 0$, we need to show that $u_k \to u$ in $\mathbb{W}_0$. Firstly, it is obvious to see that
\begin{align*}
\lim_{k \to \infty} \langle \mathcal{L}(u), u_k - u \rangle = 0,
\end{align*}
which implies to
$$\limsup_{k \to \infty} \langle \mathcal{L}(u_k) - \mathcal{L}(u), u_k - u \rangle \le 0.$$
Combining this with the fact that $\mathcal{L}$ is monotone, it allows us to conclude that 
$$\lim_{n\to \infty}{\langle \mathcal{L}(u_k)-\mathcal{L}(u),u_k-u \rangle} = 0.$$ 
In other words, we have
\begin{align}\label{est-3-0}
& \lim_{k\to \infty} \fint_{\Omega} \left( \mathcal{A}(\nabla u_k) - \mathcal{A}(\nabla u) \right) \cdot \left(\nabla u_k - \nabla u \right)dx = 0. 
\end{align} 
It follows from~\cite[Section 3.2]{BCM16} that there exists a constant $C>0$ such that
\begin{align}\label{est-3-1}
& \fint_{\Omega}\left( \mathcal{A}(\nabla u_k) - \mathcal{A}(\nabla u) \right) \cdot \left(\nabla u_k - \nabla u \right)dx \ge C \fint_{\Omega} |\nabla u_k - \nabla u|^2 (|\nabla u_k|^2+|\nabla u|^2)^{\frac{p-2}{2}}dx \notag \\ 
& \qquad + C \fint_{\Omega} |\nabla u_k - \nabla u|^2 (|\nabla u_k|^2+|\nabla u|^2)^{\frac{p-2}{2}}\log (e+|\nabla u_k|+|\nabla u|)dx.
\end{align}
On the other hand, for every $\varepsilon>0$, Young's inequality leads to exist a constant $C_{\varepsilon} = C_{\varepsilon}(p,\varepsilon)>0$ such that
\begin{align}\label{est-3-2}
\fint_{\Omega} & |\nabla u_k - \nabla u|^p dx  \le \varepsilon \fint_{\Omega} |\nabla u|^pdx \notag \\
& \qquad + C_{\varepsilon} \fint_{\Omega} |\nabla u_k - \nabla u|^2 (|\nabla u_k|^2+|\nabla u|^2)^{\frac{p-2}{2}}dx.
\end{align}
Here, for the detailed proof of comparison estimate~\eqref{est-3-2}, we propose to the reader our previous work~\cite[Lemma 3.2]{TNPD22}. The same technique applied, it yields
\begin{align}
& \fint_{\Omega} |\nabla u_k - \nabla u|^p \log (e+|\nabla u_k - \nabla u|) dx \notag \\
& \qquad \le \varepsilon \left(\fint_{\Omega} |\nabla u|^p \log (e+|\nabla u|)dx + \fint_{\Omega} |\nabla u_k|^p \log (e+|\nabla u_k|)dx\right) \notag \\
& \qquad \quad + C_{\varepsilon} \fint_{\Omega} |\nabla u_k - \nabla u|^2 (|\nabla u_k|^2+|\nabla u|^2)^{\frac{p-2}{2}} \log (e+|\nabla u_k| + |\nabla u|)dx. \label{est-3-3a}
\end{align}
Indeed, to obtain~\eqref{est-3-3a}, it is worth noting that the following inequality
\begin{align}
|\nabla u_k - \nabla u|^p & \log (e+|\nabla u_k - \nabla u|) \notag \\
& \le C|\nabla u_k - \nabla u|^2 (|\nabla u_k|^2+|\nabla u|^2)^{\frac{p-2}{2}} \log (e+|\nabla u_k| + |\nabla u|) \notag
\end{align}
holds whenever $p \ge 2$. When $1 < p <2$ and for every $\varepsilon>0$, thanks to Young's inequality, it gives
\begin{align}
& |\nabla u_k - \nabla u|^p \log (e+|\nabla u_k - \nabla u|) \notag \\
& = (|\nabla u_k|^2+|\nabla u|^2)^{\frac{p(2-p)}{4}} \left( (|\nabla u_k|^2+|\nabla u|^2)^{\frac{p-2}{2}} |\nabla u_k - \nabla u|^2\right)^{\frac{p}{2}} \log (e+|\nabla u_k-\nabla u|) \notag\\
& \le \varepsilon (|\nabla u_k|+|\nabla u|)^p \log (e+|\nabla u_k|+|\nabla u|) \notag \\
& \qquad \qquad + C_{\varepsilon} |\nabla u_k - \nabla u|^2 (|\nabla u_k|^2+|\nabla u|^2)^{\frac{p-2}{2}} \log (e+|\nabla u_k| + |\nabla u|) \notag \\
& \le C \varepsilon \left(|\nabla u_k|^p \log (e+|\nabla u_k|) + |\nabla u|^p \log (e+|\nabla u|) \right) \notag \\
& \qquad \qquad + C_{\varepsilon} |\nabla u_k - \nabla u|^2 (|\nabla u_k|^2+|\nabla u|^2)^{\frac{p-2}{2}} \log (e+|\nabla u_k| + |\nabla u|), \notag
\end{align}
which leads to~\eqref{est-3-3a}. Moreover, by using the following inequality
\begin{align*}
|\nabla u_k|^p \log (e+|\nabla u_k|) \le C \big[|\nabla u|^p \log (e+|\nabla u|) + |\nabla u - \nabla u_k|^p \log (e+|\nabla u - \nabla u_k|)\big],
\end{align*}
we can deduce from~\eqref{est-3-3a} to
\begin{align}\label{est-3-3}
& \fint_{\Omega} |\nabla u_k - \nabla u|^p \log (e+|\nabla u_k - \nabla u|) dx \le \varepsilon \fint_{\Omega} |\nabla u|^p \log (e+|\nabla u|)dx\notag  \\
& \qquad  + C_{\varepsilon} \fint_{\Omega} |\nabla u_k - \nabla u|^2 (|\nabla u_k|^2 +|\nabla u|^2)^{\frac{p-2}{2}} \log (e+|\nabla u_k| + |\nabla u|)dx.
\end{align}
Combining all estimates in~\eqref{est-3-1},~\eqref{est-3-2} and~\eqref{est-3-3}, one obtains that
\begin{align}\label{est-3-4}
 \fint_{\Omega} & |\nabla u_k - \nabla u|^p  + |\nabla u_k - \nabla u|^p \log (e+|\nabla u_k - \nabla u|) dx \notag \\
& \qquad \le \varepsilon  \fint_{\Omega} |\nabla u|^p + |\nabla u|^p \log (e+|\nabla u|) dx \notag \\
& \qquad \qquad + C_{\varepsilon}\fint_{\Omega}\left( \mathcal{A}(\nabla u_k) - \mathcal{A}(\nabla u) \right) \cdot \left(\nabla u_k - \nabla u \right)dx,
\end{align}
for all $\varepsilon>0$. For every $\delta \in (0,1)$, let us choose $\varepsilon>0$ in~\eqref{est-3-4} such that
\begin{align*}
\varepsilon \left( 1+ \fint_{\Omega} |\nabla u|^p + |\nabla u|^p \log (e+|\nabla u|) dx\right) < \delta^p.
\end{align*}
By~\eqref{est-3-0}, there exists $k_0 \in \mathbb{N}$ such that
\begin{align*}
\fint_{\Omega}\left( \mathcal{A}(\nabla u_k) - \mathcal{A}(\nabla u) \right) \cdot \left(\nabla u_k - \nabla u \right)dx \le \varepsilon C_{\varepsilon}^{-1}, \quad \forall k \ge k_0.
\end{align*}
Therefore, it follows from~\eqref{est-3-4} that
\begin{align}
 \fint_{\Omega} |\nabla u_k - \nabla u|^p & + |\nabla u_k - \nabla u|^p \log (e+|\nabla u_k - \nabla u|) dx \notag \\
& \qquad \le \varepsilon \left( 1+ \fint_{\Omega} |\nabla u|^p + |\nabla u|^p \log (e+|\nabla u|) dx\right) < \delta^p, \label{est-3-5}
\end{align}
for all $k \ge k_0$. In what follows, when no confusion arises, we shall always consider $k \ge k_0$. From~\eqref{est-3-5}, we have
\begin{align*}
0< \|\nabla u_k - \nabla u\|_p < \delta < 1,
\end{align*}
and at this stage, apply~\eqref{tge1} with $\alpha=1$, we arrive at
\begin{align}
\fint_{\Omega} |\nabla u_k - \nabla u|^p & \log \left(e+\frac{|\nabla u_k - \nabla u|}{\|\nabla u_k - \nabla u\|_p}\right) dx \notag \\
& \le \fint_{\Omega} |\nabla u_k - \nabla u|^p \log (e+|\nabla u_k - \nabla u|) dx \notag \\
& \qquad + \fint_{\Omega} |\nabla u_k - \nabla u|^p \log \left(\frac{1}{\|\nabla u_k - \nabla u\|_p}\right) dx. \label{est-3-6}
\end{align}
Further,~\eqref{tge1-2} with specific $\alpha=p=1$, we readily obtain
\begin{align*}
\fint_{\Omega} |\nabla u_k - \nabla u|^p & \log \left(\frac{1}{\|\nabla u_k - \nabla u\|_p}\right) dx \\
& \le \frac{1}{e} \fint_{\Omega} |\nabla u_k - \nabla u|^p \frac{1}{\|\nabla u_k - \nabla u\|_p} dx  \le  \frac{1}{e} \delta^{p-1},
\end{align*}
and due to~\eqref{est-3-6}, it gives
\begin{align}
\fint_{\Omega} |\nabla u_k - \nabla u|^p & \log \left(e+\frac{|\nabla u_k - \nabla u|}{\|\nabla u_k - \nabla u\|_p}\right) dx \notag \\
& \le \fint_{\Omega} |\nabla u_k - \nabla u|^p \log (e+|\nabla u_k - \nabla u|) dx + \frac{1}{e} \delta^{p-1}. \label{est-3-7}
\end{align}
Moreover, from~\eqref{norm-W0}, the norm in $\mathbb{W}_0$ and in virtue of inequality~\eqref{est:Lbb} of Lemma~\ref{lem:Lbb}, we have
\begin{align}
\|u_k - u\|_{\mathbb{W}_0} & = \|\nabla u_k - \nabla u\|_{\mathbb{L}^{p\log}(\Omega)} \notag \\
& \le \left[\fint_{\Omega} |\nabla u_k - \nabla u|^p  + |\nabla u_k - \nabla u|^p \log \left(e+\frac{|\nabla u_k - \nabla u|}{\|\nabla u_k - \nabla u\|_p}\right) dx\right]^{\frac{1}{p}}. \label{est-3-8}
\end{align}
Substituting~\eqref{est-3-5} and~\eqref{est-3-7} into~\eqref{est-3-8}, we infer that
\begin{align}\notag
\|u_k - u\|_{\mathbb{W}_0} & \le \left[\delta^p + \frac{1}{e} \delta^{p-1}\right]^{\frac{1}{p}}.
\end{align}
Therefore, it allows us to conclude that 
\begin{align}\notag
& \lim_{k \to \infty} \|u_k - u\|_{\mathbb{W}_0} = 0,
\end{align}
or $u_k \to u$ in $\mathbb{W}_0$. The proof is complete.
\end{proof}

Once having the previous Lemma at hand, we are now in a position to accomplish the proof of Theorem~\ref{theo-main}, the main existence result in this study.\\

\begin{proof}[Proof of Theorem~\ref{theo-main}]
Let $I^*: (L^{q}\log L(\Omega))^* \to \mathbb{W}_0^*$ be the adjoined operator of the compact embedding $I: \mathbb{W}_0 \to L^{q}\log L(\Omega)$. Assume that 
$$\tilde{\mathcal{N}}_F: \mathbb{W}_0 \subset L^{q}\log L(\Omega) \to (L^{q}\log L(\Omega))^*$$ is the Nemytskij operator associated to $F$ and set $\mathcal{N}_F = I^* \circ \tilde{\mathcal{N}}_F$. According to the assumption~\eqref{Asump-f-1}$_1$, we arrive at
\begin{align}\notag
|F(x,u,\nabla u)| & \le g(x) + \mu_1 |u|^{q-1}\log (e+|u|) + \mu_2 |\nabla u|^{\frac{p}{q'}}\log \left(e+|\nabla u|^{\frac{q}{p}}\right). 
\end{align}
Thanks to inequality~\eqref{G-star} in Lemma~\ref{lem-Young}, it follows that if the Young function $G \in \Delta_2 \cap \nabla_2$ and $u \in L^{G}(\Omega)$, then $\frac{G(|u|)}{|u|} \in (L^{G}(\Omega))^* = L^{G^*}(\Omega)$. Combining this with the following facts
\begin{align*}
u \in L^{q}\log L(\Omega) = L^{\varphi_{q}}(\Omega), \mbox{ and } |\nabla u|^{\frac{q}{p}} \in L^{q}\log L(\Omega) \ \mbox{ (Remark~\ref{rmk-1})},
\end{align*}
it leads to
\begin{align*}
\frac{\varphi_q(|u|)}{|u|} \in (L^{q}\log L(\Omega))^*, \mbox{ and } \frac{\varphi_{q}(|\nabla u|^{\frac{q}{p}})}{|\nabla u|^{\frac{q}{p}}} \in (L^{q}\log L(\Omega))^*, 
\end{align*}
From this reasoning, we conclude that the operator $\mathcal{N}_F$ maps $\mathbb{W}_0$ into $\mathbb{W}_0^*$ by Lemma~\ref{lem-Hold}. Next, let us introduce a new operator $\mathcal{G}: \mathbb{W}_0 \to \mathbb{W}_0^*$ defined by
\begin{align}\label{def-G-u}
\mathcal{G}(u) = \mathcal{L}(u) - \mathcal{N}_F(u), \quad u \in \mathbb{W}_0,
\end{align}
and it is clear that this operator maps bounded sets into bounded sets. 

At this stage, let us now show that $\mathcal{G}$ is pseudo-monotone, that means
\begin{align}\label{pseudo-1}
u_k \rightharpoonup u \mbox{ in } \mathbb{W}_0 \ \mbox{ and } \ \limsup_{k \to \infty} \langle \mathcal{G}(u_k), u_k - u \rangle \le 0,
\end{align}
imply $u_k \to u$ in $\mathbb{W}_0$. Assume that the sequence $(u_k)$ satisfies~\eqref{pseudo-1}, one need to show that $u_k \to u$ in $\mathbb{W}_0$. With Lemma~\ref{lem:embed} at hand, the embedding $\mathbb{W}_0 \hookrightarrow L^{q}\log L(\Omega)$ is compact. It then leads to $u_k \to u$ in $L^{q}\log L(\Omega)$. On the other hand, by assumption~\eqref{Asump-f-1}$_1$ and H\"older's inequality~\eqref{Hold-ineq} in Lemma~\ref{lem-Hold}, we infer that
\begin{align}\notag
\left|\int_{\Omega}F(x,u_k,\nabla u_k)(u_k-u) dx \right|  & \le \int_{\Omega} g(x) |u_k-u| dx \\
& \quad + \mu_1 \int_{\Omega} |u_k|^{q-1}\log (e+|u_k|) |u_k-u| dx \notag\\
& \qquad  + \mu_2 \int_{\Omega} |\nabla u_k|^{\frac{p}{q'}}\log \left(e+|\nabla u_k|^{\frac{q}{p}}\right) |u_k-u| dx \notag \\
& \le T_k \|u_k-u\|_{L^{q}\log L(\Omega)}, \label{est-F1}
\end{align}
where the bounded sequence $T_k$ is given by
\begin{align*}
T_k = \|g\|_{(L^{q}\log L(\Omega))^*} + \left\|\frac{\varphi_q(u_k)}{|u_k|}\right\|_{(L^{q}\log L(\Omega))^*} + \left\|\frac{\varphi_{q}(|\nabla u_k|^{p/q})}{|\nabla u_k|^{p/q}}\right\|_{(L^{q}\log L(\Omega))^*}.
\end{align*}
Passing to the limit $k \to \infty$ in~\eqref{est-F1}, it gives
\begin{align*}
\lim_{k \to \infty} \int_{\Omega}F(x,u_k,\nabla u_k)(u_k-u) dx = 0,
\end{align*}
and taking~\eqref{pseudo-1} into account, it ensures that
\begin{align}\notag
\limsup_{k \to \infty} \langle \mathcal{A}(u_k), u_k - u\rangle = \limsup_{k \to \infty} \langle \mathcal{G}(u_k), u_k - u \rangle \le 0.
\end{align} 
Invoking Lemma~\ref{lem:properties-L}, we are allowed to conclude that $u_k \to u$ in $\mathbb{W}_0$. Hence, one gets that $\mathcal{G}(u_k) \to \mathcal{G}(u)$ in $\mathbb{W}_0^*$ by the continuity of $\mathcal{G}$ and therefore, $\mathcal{G}$ is pseudo-monotone.

The last step is devoted to showing that $\mathcal{G}$ is coercive, which means
\begin{align}\label{coer-G}
\lim_{\|u\|_{\mathbb{W}_0} \to \infty}\frac{\langle \mathcal{G}(u), u \rangle}{\|u\|_{\mathbb{W}_0}} = \infty.
\end{align}
For every $u \in \mathbb{W}_0$, let us present $\langle \mathcal{G}(u), u \rangle$ as follows
\begin{align}\label{coercive-0}
\langle \mathcal{G}(u), u \rangle & = |\Omega| \left(p\fint_{\Omega} |\nabla u|^p + |\nabla u|^p \log(e + |\nabla u|) dx + \fint_{\Omega} \frac{|\nabla u|^{p+1}}{e+|\nabla u|}dx \right. \notag \\
& \qquad \qquad \left. - \fint_{\Omega} F(x,u,\nabla u) u dx\right).
\end{align}
Combining assumption~\eqref{Asump-f-1}$_2$ with inequality~\eqref{eig-value}, it gives us that
\begin{align}\label{coercive-1}
\fint_{\Omega} F(x,u,\nabla u) u dx &\le \fint_{\Omega} h(x) dx + \mu_3 \fint_{\Omega} |u|^p  dx  + \mu_4 \fint_{\Omega} |\nabla u|^p \log(e + |\nabla u|) dx \notag \\
 & \le \|h\|_1 +  \frac{\mu_3}{\lambda_{1,p}} \fint_{\Omega} |\nabla u|^p  dx + \mu_4 \fint_{\Omega} |\nabla u|^p \log(e + |\nabla u|) dx.
\end{align}
Then, substituting~\eqref{coercive-1} into~\eqref{coercive-0}, it leads to
\begin{align}\label{coercive-2}
\langle \mathcal{G}(u), u \rangle & \ge |\Omega| \left[\left(p - \frac{\mu_3}{\lambda_{1,p}}\right) \fint_{\Omega} |\nabla u|^p  dx  + (p-\mu_4)\fint_{\Omega}|\nabla u|^p \log(e + |\nabla u|) dx - \|h\|_{1}\right].
\end{align}
On the other hand, it is obviously that
\begin{align*}
\fint_{\Omega}|\nabla u|^p \log \left(e + \frac{|\nabla u|}{\|\nabla u\|_p}\right) dx \le \fint_{\Omega}|\nabla u|^p \log(e + |\nabla u|) dx
\end{align*}
whenever $\|\nabla u\|_p \ge 1$. In the other case $0< \|\nabla u\|_p < 1$, thanks to~\eqref{tge1} and~\eqref{tge1-2}, there holds
\begin{align}\label{coercive-3}
\fint_{\Omega}|\nabla u|^p \log \left(e + \frac{|\nabla u|}{\|\nabla u\|_p}\right) dx & \le \fint_{\Omega}|\nabla u|^p \log(e + |\nabla u|) dx + \fint_{\Omega}|\nabla u|^p \log(\|\nabla u\|_p^{-1}) dx \notag \\
& \le \fint_{\Omega}|\nabla u|^p \log(e + |\nabla u|) dx + \frac{1}{ep}.
\end{align}
For this reason, inequality~\eqref{coercive-3} holds whenever $\|\nabla u\|_p > 0$. Combining~\eqref{coercive-2} and~\eqref{coercive-3}, one obtains that
\begin{align}\label{coercive-4}
\langle \mathcal{G}(u), u \rangle  & \ge |\Omega| \left[\left(p - \frac{\mu_3}{\lambda_{1,p}}\right) \fint_{\Omega} |\nabla u|^p  dx \right. \notag \\ & \qquad \left.  + (p-\mu_4)\fint_{\Omega}|\nabla u|^p \log \left(e + \frac{|\nabla u|}{\|\nabla u\|_p}\right) dx - \frac{p-\mu_4}{ep} - \|h\|_{1}\right] \notag \\
& \ge |\Omega| \left[ \mu_0 [\nabla u]_{\mathbb{L}^{p\log}(\Omega)}^p - \frac{p-\mu_4}{ep} - \|h\|_{1}\right] \notag \\
& \ge |\Omega| \left[ \mu_0 \|u\|_{\mathbb{W}_0}^p - \frac{p-\mu_4}{ep} - \|h\|_{1}\right],
\end{align}
where $\mu_0$ is given by 
$$\mu_0 = \min\left\{p - \frac{\mu_3}{\lambda_{1,p}}; p-\mu_4\right\}.$$ 
Assumption~\eqref{cond-mu} guarantees that the constant $\mu_0$ is positive. Since $p>1$ and $\mu_0>0$, we may conclude that~\eqref{coer-G} holds from~\eqref{coercive-4}. This mean the operator $\mathcal{G}$ is coercive.

At this point, as our previous proofs, all hypotheses of Theorem~\ref{theo-CLM} hold true for $W = \mathbb{W}_0$ and operator $\mathcal{G}$ defined in~\eqref{def-G-u}, hence there exists at least one $u \in \mathbb{W}_0$ such that $\mathcal{G}(u) = 0$. In conclusion, the problem~\eqref{main-eq} admits at least one weak solution in $\mathbb{W}_0$. The proof is now complete.
\end{proof}



\begin{thebibliography}{99}
\footnotesize

\bibitem{AM01} E. Acerbi, G. Mingione, {\em Regularity results for a class of functionals with nonstandard growth}, Arch. Rational Mech. Anal. {\bf 156} (2001), 121--140.

\bibitem{AM05} E. Acerbi, G. Mingione, {\em Gradient estimates for the $p(x)$-Laplacean system}, J. Reine Angew. Math. {\bf 584} (2005), 117--148.





\bibitem{BCM15} P. Baroni, M. Colombo, G. Mingione, {\em Harnack inequalities for double phase functionals}, Nonlinear Anal. {\bf 121} (2015), 206--222.

\bibitem{BCM16} P. Baroni, M. Colombo, G. Mingione, {\em Non-autonomous functionals, borderline cases and related function classes}, St. Petersburg Math. J. {\bf 27} (2016), 347--379.

\bibitem{BCM18} P. Baroni, M. Colombo, G. Mingione, {\em Regularity for general functionals with double phase}, Calc. Var. Partial Differ. Equ. {\bf 57}(2) (2018) 62, 48 pp.

\bibitem{BM2020}  L. Beck, G. Mingione, {\em Lipschitz bounds and nonuniform ellipticity}, Comm. Pure Appl. Math. {\bf 73}(5) (2020), 944--1034.

\bibitem{BS1988} C. Bennett, R. Sharpley, {\em Interpolation of Operators}, Academic Press, 1988.

\bibitem{Brezis} H. Brezis, {\em Equations et in\'equations non lin\'eaires dans les espaces vectoriels \'endualit\'e}, Universit\'e de Grenoble. Annales de l’Institut Fourier {\bf 18}, (1968), 115--175.

\bibitem{Browder} F. E. Browder, {\em Nonlinear elliptic boundary value problems}, Bull. Amer.
Math. Soc. {\bf 69} (1963), 862--874.


\bibitem{BO2017} S.-S. Byun, J. Oh, {\em Global gradient estimates for non-uniformly elliptic equations}, Calc. Var. Partial Differ. Equ. {\bf 56} (2017), 46.

\bibitem{CLM07} S. Carl, V. K. Le, D. Motreanu, {\em Nonsmooth variational problems and their inequalities}, Springer, New York, 2007.

\bibitem{C96} A. Cianchi, {\em A sharp embedding theorem for Orlicz–Sobolev spaces}, Indiana Univ. Math. J. {\bf 45} (1996),  39--65.


\bibitem{CM2015} M. Colombo, G. Mingione, {\em Bounded minimisers of double phase variational integrals}, Arch. Ration. Mech. Anal. {\bf 218}(1) (2015), 219--273.

\bibitem{CM2015_2} M. Colombo, G. Mingione, {\em Regularity for double phase variational problems}, Arch. Ration. Mech. Anal. {\bf 215}(2) (2015), 443--496.


\bibitem{FM2019} C. De Filippis, G. Mingione, {\em A borderline case of Calder\'on-Zygmund estimates for non-uniformly elliptic problems}, St. Petersburg Math. J. {\bf 31}(2020) 455-477.


\bibitem{FM2020} C. De Filippis, G. Mingione, {\em Manifold constrained non-uniformly elliptic problems}, J. Geom. Anal. {\bf 30} (2020), 1661--1723.

\bibitem{FM2020_2} C. De Filippis, G. Mingione, {\em On the regularity of minima of non-autonomous functionals}, J. Geom. Anal. {\bf 30}(2) (2020), 1584--1626.

\bibitem{FM2023} C. De Filippis, G. Mingione, {\em  Regularity for double phase problems at nearly linear growth}, Arch. Rational Mech. Anal. {\bf 247}, 85 (2023).


\bibitem{EMW2022} S. El Manouni, G. Marino, P. Winkert, {\em Existence results for double phase problems depending on Robin and Steklov eigenvalues for the $p$-Laplacian}, Adv. Nonlinear Anal. {\bf 11} (2022), 304--320.

%
%
%
%
%
%
%

\bibitem{GW19} L. Gasi\'nski, P. Winkert, {\em Existence and uniqueness results for double phase problems with convection term}, J. Differential Equations {\bf 268}(8) (2020), 4183--4193.

\bibitem{GP1987} J. P. Garc\'ia Azorero, I. Peral Alonso, {\em Existence and nonuniqueness for the $p$-Laplacian: nonlinear eigenvalues}, Comm. Partial Differential Equations {\bf 12}(12) (1987), 1389--1430. 

\bibitem{HS1966} P. Hartman, G. Stampacchia, {\em On some non-linear elliptic differential functional equations}, Acta Mathematica {\bf 115}(1) (1966), 271--310.

\bibitem{HH19} P. Harjulehto, P. H\"ast\"o, {\em Orlicz Spaces and Generalized Orlicz Spaces},  Lecture Notes in Mathematics,  Springer, 1st ed. 2019.

\bibitem{IV99} T. Iwaniec, A. Verde, {\em On the Operator $L(f)=f\log |f|$},  J. Funct. Anal. {\bf 169} (1999), 391--420.

\bibitem{LM2019} J. Lang, O. Mendez, {\em Analysis on Function Spaces of Musielak-Orlicz Type}, Chapman \& Hall/CRC Monographs and Research Notes in Mathematics, 2019.

\bibitem{L06} A. Le, {\em Eigenvalue problems for the $p$-Laplacian}, Nonlinear Anal. {\bf 64}(5) (2006), 1057--1099.

\bibitem{Lin90} P. Lindqvist, {\em On the equation $\mathrm{div}(|\nabla u|^{p-2}\nabla u) +\lambda|u^{p-2}|u=0$}, Proc. Amer. Math. Soc. {\bf 109}(1) (1990), 157--164.


\bibitem{LD18} W. Liu, G. Dai, {\em Existence and multiplicity results for double phase problem}, J. Differ. Equ. {\bf 265}(9) (2018), 4311--4334.

\bibitem{Marcellini1989} P. Marcellini, {\em  Regularity of minimisers of integrals of the calculus of variations with non-standard growth conditions}, Arch. Rat. Mech. Anal. {\bf 105} (1989), 267--284.

\bibitem{Marcellini1991} P. Marcellini, {\em  Regularity and existence of solutions of elliptic equations with $p,q$-growth conditions},  J. Diff. Equ. {\bf 90} (1991), 1--30.

\bibitem{MW2019} S. A. Marano, P. Winkert, {\em On a quasilinear elliptic problem with convection term and nonlinear boundary condition}, Nonlinear Anal. {\bf 187} (2019), 159--169.


\bibitem{MS1999} G. Mingione, F. Siepe, {\em Full $C^{1,\alpha}$-regularity for minimizers of integral functionals with $L \log L$-growth},  Z. Anal. Anw. {\bf 18} (1999), 1083--1100.

\bibitem{MR2021}  G. Mingione, V. D. R\c{a}dulescu, {\em Recent developments in problems with nonstandard growth and nonuniform ellipticity}, J. Math. Anal. Appl. {\bf 501}(1) (2021),  125197, 41 pp.


\bibitem{Minty} G. Minty, {\em On a monotonicity method for the solution of non-linear equations in Banach spaces}, Proc. Nat. Acad. Sci. U.S.A. {\bf 50} (1963), 1038--1041.



\bibitem{PS18} K. Perera, M. Squassina, {\em Existence results for double-phase problems via Morse theory}, Commun. Contemp. Math. {\bf 20}(2) (2018), 1750023, 14 pp.

\bibitem{R2019} V. D. R\v{a}dulescu, {\em Isotropic and anisotropic double-phase problems: old and new}, Opuscula Math. {\bf 39}(2) (2019), 259--279.



\bibitem{Stein} E. M. Stein, {\em Note on the Class $L\mathrm{log}L$}, Studia Math. {\bf 32} (1969), 305--310.

\bibitem{TN21} M.-P. Tran, T.-N. Nguyen, {\em Global Lorentz estimates for non-uniformly nonlinear elliptic equations via fractional maximal operators}, J. Math. Anal. Appl. {\bf 501}(1) (2021), 124084.

\bibitem{TNPD22} M.-P. Tran, T.-N. Nguyen, L.-T.-N. Pham, T.-T.-T. Dang, {\em Weighted Lorentz estimates for non-uniformly elliptic problems with variable exponents}, Manuscripta Math. {\bf 172} (2023), 1227--1244.

\bibitem{VW2022} F. Vetro, P. Winkert, {\em Existence, uniqueness and asymptotic behavior of parametric anisotropic $(p,q)$-equations with convection}, Appl. Math. Optim. {\bf 86}(18) (2022), 18 pp.


\bibitem{ZR2018} Q. Zhang, V. D. R\v{a}dulescu, {\em Double phase anisotropic variational problems and combined effects of reaction and absorption terms}, J. Math. Pures Appl. (9) {\bf 118} (2018), 159--203.

\bibitem{Zhikov1986}  V. V. Zhikov, {\em Averaging of functionals of the calculus of variations and elasticity theory}, Izv. Akad. Nauk SSSR Ser. Mat. {\bf 50}(4) (1986), 675--710.

%
%

\bibitem{Zhikov2011} V. V. Zhikov, {\em On variational problems and nonlinear elliptic equations with nonstandard growth conditions}, J. Math. Sci. {\bf 173} (2011), 463--570.

\end{thebibliography}
\end{document}